\documentclass[11pt,a4paper]{article}
\usepackage{a4wide}

\usepackage[T1]{fontenc}
\usepackage{lmodern}
\usepackage{amsthm,amsmath,amssymb}
\usepackage{hyperref}
\usepackage{microtype}
\usepackage{hyperref}
\usepackage{graphicx}
\usepackage{enumitem}

\newtheorem{thm}{Theorem}%

\newtheorem{lemma}[thm]{Lemma}
\newtheorem{cor}[thm]{Corollary}
\newtheorem{proposition}[thm]{Proposition}

\newtheorem{problem}[thm]{Problem}
\newtheorem{conjecture}[thm]{Conjecture}

\theoremstyle{remark}


\if10    
\usepackage[mathlines]{lineno}
\newcommand*\patchAmsMathEnvironmentForLineno[1]{%
  \expandafter\let\csname old#1\expandafter\endcsname\csname #1\endcsname
  \expandafter\let\csname oldend#1\expandafter\endcsname\csname end#1\endcsname
  \renewenvironment{#1}%
     {\linenomath\csname old#1\endcsname}%
     {\csname oldend#1\endcsname\endlinenomath}}%
\newcommand*\patchBothAmsMathEnvironmentsForLineno[1]{%
  \patchAmsMathEnvironmentForLineno{#1}%
  \patchAmsMathEnvironmentForLineno{#1*}}%
\AtBeginDocument{%
\patchBothAmsMathEnvironmentsForLineno{equation}%
\patchBothAmsMathEnvironmentsForLineno{align}%
\patchBothAmsMathEnvironmentsForLineno{flalign}%
\patchBothAmsMathEnvironmentsForLineno{alignat}%
\patchBothAmsMathEnvironmentsForLineno{gather}%
\patchBothAmsMathEnvironmentsForLineno{multline}%
}
\linenumbers
\fi

\def\inst#1{$^{#1}$}

\begin{document}

\title{On off-diagonal ordered Ramsey numbers of nested matchings}

\author{Martin Balko\inst{1} \thanks{Martin Balko was supported by the grant no.~19-04113Y of the Czech Science Foundation (GA\v{C}R) and by the Center for Foundations of Modern Computer Science (Charles University project UNCE/SCI/004). This article is part of a project that has received funding from the European Research Council (ERC) under the European Union's Horizon 2020 research and innovation programme (grant agreement No. 810115).} 
\and
Marian Poljak \inst{1} \thanks{Marian Poljak was supported by the grant
SVV–2020-260 578.} 
}

\maketitle

\begin{center}
{\footnotesize
\inst{1} 
Department of Applied Mathematics, \\
Faculty of Mathematics and Physics, Charles University, Czech Republic \\
\texttt{balko@kam.mff.cuni.cz}, \texttt{marian@kam.mff.cuni.cz}
}
\end{center}

\begin{abstract}
For two graphs $G^<$ and $H^<$ with linearly ordered vertex sets, the \emph{ordered Ramsey number} $r_<(G^<,H^<)$ is the minimum $N$ such that every red-blue coloring of the edges of the ordered complete graph on $N$ vertices contains a red copy of $G^<$ or a blue copy of $H^<$.

For a positive integer $n$, a \emph{nested matching} $NM^<_n$ is the ordered graph on $2n$ vertices with edges $\{i,2n-i+1\}$ for every $i=1,\dots,n$.
We improve bounds on the ordered Ramsey numbers $r_<(NM^<_n,K^<_3)$ obtained by Rohatgi, we disprove his conjecture by showing $4n+1 \leq r_<(NM^<_n,K^<_3) \leq (3+\sqrt{5})n+1$ for every $n \geq 6$, and we determine the numbers $r_<(NM^<_n,K^<_3)$ exactly for $n=4,5$.
As a corollary, this gives stronger lower bounds on the maximum chromatic number of $k$-queue graphs for every $k \geq 3$.
We also prove $r_<(NM^<_m,K^<_n)=\Theta(mn)$ for arbitrary $m$ and $n$.

We expand the classical notion of Ramsey goodness to the ordered case and we attempt to characterize all connected ordered graphs that are $n$-good for every $n\in\mathbb{N}$.
In particular, we discover a new class of ordered trees that are $n$-good for every $n \in \mathbb{N}$, extending all the previously known examples.
\end{abstract}

\section{Introduction}
Ramsey theory is devoted to the study of the minimum size of a system that guarantees the existence of a highly organized subsystem. 
Given graphs $G$ and $H$, their \emph{Ramsey number} $r(G,H)$ is the smallest $N \in \mathbb{N}$ such that any two-coloring of the edges of $K_N$ contains either $G$ as a red subgraph or $H$ as a blue subgraph of $K_N$.
The case $G=H$ is called the \emph{diagonal} case and in this case we use the abbreviation $r(G) = r(G,G)$. 

The growth rate of Ramsey numbers has been of interest to many researchers.
In general, it is notoriously difficult to find tight estimates on Ramsey numbers.
For example, despite many efforts, the best known bounds on $r(K_n)$ are essentially
\begin{equation}
\label{eq-ramseyBounds}
2^{n/2} \leq r(K_n)\leq 2^{2n}
\end{equation}
obtained by Erd\H{o}s and Szekeres~\cite{erdosSzekeres35}, although some smaller term improvements are known.
For a more comprehensive survey we can refer the reader to~\cite{recent_developments}. 

In this paper, we study ordered graphs.
An \emph{ordered graph} $G^<$ on $n$ vertices is a graph whose vertex set is $[n]:=\{1,\dots, n\}$ and it is ordered by the standard ordering $<$ of integers.
For an ordered graph $G^<$, we use $G$ to denote its unordered counterpart.
An ordered graph $G^<$ on $[n]$ is an \emph{ordered subgraph} of another ordered graph $H^<$ on $[N]$ if there exists a mapping $\phi: [n] \rightarrow [N]$ such that $\phi(i) < \phi(j)$ for $1 \leq i < j \leq n$ and also $\{\phi(i),\phi(j)\}$ is an edge of $H^<$ whenever $\{i,j\}$ is an edge of $G^<$.
Definitions that are often stated for unordered graphs, such as vertex degrees, degeneracy, colorings, and so on, have their natural analogues for ordered graphs. Note that, for every $n\in\mathbb{N}$, there is a unique complete ordered graph $K^<_n$.

Motivated by connections to classical results such as the Erd\H{o}s--Szekeres theorem on monotone subsequences~\cite{erdosSzekeres35},
various researchers \cite{balko,conlon} recently initiated the study of Ramsey numbers of ordered graphs.
Given two ordered graphs $G^<$ and $H^<$, the \emph{ordered Ramsey number} $r_<(G^<,H^<)$ is defined as the smallest $N$ such that any two-coloring of the edges of~$K^<_N$ contains either $G^<$ as a red ordered subgraph or $H^<$ as a blue ordered subgraph.

Observe that for any two ordered graphs $G^<_1$ and $G^<_2$ on $n_1$ and $n_2$ vertices, respectively, we have
$r(G_1, G_2) \leq r_<(G^<_1, G^<_2) \leq r(K_{n_1},K_{n_2})$.
Thus, by~\eqref{eq-ramseyBounds}, the number $r_<(G^<_1, G^<_2)$ is finite and, in particular, $r_<(G^<)$ is at most exponential in the number of vertices for every ordered graph $G^<$.

It is known that for dense graphs, there is not a huge difference in the growth rate of their ordered and unordered Ramsey numbers~\cite{balko,conlon}.
On the other hand, ordered Ramsey numbers of sparse ordered graphs behave very differently from their unordered counterparts.
For example,  Ramsey numbers of \emph{matchings} (that is, graphs with maximum degree $1$) are clearly linear in the number of their vertices. However, it was proved independently in~\cite{balko, conlon} that there exist ordered matchings whose diagonal ordered Ramsey numbers grow superpolynomially. 

\begin{thm}[\cite{balko,conlon}]
\label{superpolynomial_matchings}
There are arbitrarily large ordered matchings $M^<$ on $n$ vertices that satisfy
\[r_<(M^<) = n^{\Omega\left(\frac{\log n}{\log{\log n}}\right)}.\]
\end{thm}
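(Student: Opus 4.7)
My plan is to use a probabilistic/counting argument. For $N = n^{c\log n/\log\log n}$ with a small constant $c>0$, I would construct a specific 2-coloring $\chi$ of $K^<_N$ for which the number of ordered matchings on $[n]$ monochromatically embeddable in $\chi$ is strictly less than the total $(n-1)!! = n^{(n/2)(1+o(1))}$. Any ordered matching on $[n]$ missed by $\chi$ then witnesses $r_<(M^<)>N$, giving the claimed $r_<(M^<) = n^{\Omega(\log n/\log\log n)}$.

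For the coloring I would take the \emph{scale coloring}: color the edge $\{i,j\}$ with $i<j$ red if $\lfloor\log_2(j-i)\rfloor$ is even and blue otherwise. This partitions $E(K^<_N)$ into $O(\log N)$ scales of alternating color, and any monochromatic ordered matching must draw its edges from scales of one fixed parity. Since edges at scale $s$ have length in $[2^s,2^{s+1})$, at most $\lfloor N/2^s\rfloor$ of them fit vertex-disjointly, so the space of realizable monochromatic matchings is severely restricted. One then encodes each monochromatic matching by its sequence of edge-scales together with the intra-scale placements, obtaining an upper bound on $|\mathcal{M}(\chi)|$ via a product/entropy-style estimate that is subdominant to $(n-1)!!$ for $N$ as above.

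The main obstacle is making this quantitative counting of $|\mathcal{M}(\chi)|$ tight enough: naive bounds such as $2\binom{N}{n}(n-1)!!$ are useless, and even natural first attempts overcount significantly. One must genuinely exploit vertex-disjointness within each scale, the geometric size-separation across scales, and the order-preserving nature of the embedding. After optimizing the resulting tradeoff, the balance that minimizes the bound produces the exponent $\log n/\log\log n$; the remaining extraction step---concluding $r_<(M^<) > N$ for some $M^<$ outside $\mathcal{M}(\chi)$---is then immediate, and letting $n$ range over all sufficiently large even integers gives the claimed infinite family.
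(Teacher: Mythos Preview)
The paper does not prove Theorem~\ref{superpolynomial_matchings}; it merely quotes the result from~\cite{balko,conlon}. So there is no ``paper's own proof'' to compare against --- only the proofs in the cited references.

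Your high-level strategy (fix a single coloring $\chi$ of $K^<_N$, show that fewer than $(n-1)!!$ ordered matchings on $[n]$ admit a monochromatic embedding in $\chi$, and pick a missed one) is sound and is indeed the shape of the arguments in~\cite{balko,conlon}. However, two things separate your proposal from an actual proof.

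First, the coloring. The proofs in~\cite{balko,conlon} do \emph{not} use the scale coloring. They partition $[N]$ into $m$ consecutive intervals and lift a Ramsey-extremal $2$-coloring of $K_m$ (one with no monochromatic clique of size roughly $2\log_2 m$) to $K^<_N$; edges inside an interval get a fixed colour. The analysis then shows that a \emph{random} matching on $[n]$ is, with positive probability, not monochromatically embeddable, because any embedding would have to send many vertices into few intervals, forcing a large ``interval-homogeneous'' sub-pattern that a random matching almost surely lacks. The exponent $\log n/\log\log n$ emerges from balancing $m$ against the interval width and the clique number of the base coloring. Your scale coloring has no analogue of this clique obstruction: each monochromatic class is a union of $\Theta(\log N)$ ``length bands'', and it is far from clear that this restricts the set of embeddable matchings strongly enough. (For instance, many natural matchings --- nested, aligned, mixtures of one long edge with many short ones --- embed monochromatically already for $N$ linear or quadratic in $n$, simply by placing all edges in one or two even scales.)

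Second, and more seriously, you do not actually perform the count. The sentence ``After optimizing the resulting tradeoff, the balance that minimizes the bound produces the exponent $\log n/\log\log n$'' is an assertion, not an argument. You yourself note that the naive bound $2\binom{N}{n}(n-1)!!$ is useless, and the refinements you gesture at (vertex-disjointness within a scale, geometric separation across scales) are not developed into any inequality. Without this, there is no proof --- and given the first point, it is not clear that the scale coloring \emph{can} be made to yield the required bound at all. If you want to rescue this route, you would need either to switch to the block-partition coloring of~\cite{balko,conlon} or to produce a genuine upper bound on the number of monochromatic $n$-vertex matchings in the scale coloring and show it is $o\bigl((n-1)!!\bigr)$ for $N=n^{c\log n/\log\log n}$.
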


The bound from Theorem~\ref{superpolynomial_matchings} is quite close to the truth, as Conlon, Fox, Lee and Sudakov~\cite{conlon} proved that $r_<(G^<,K^<_n) = 2^{O(d\log^2{(2n/d)})}$ for every ordered graph $G^<$ on $n$ vertices with degeneracy $d$.
In particular, we have $r_<(G^<) \leq r_<(G^<,K^<_n) = n^{O(\log{n})}$ if $G^<$ has its maximum degree bounded by a constant.

There has also been a keen interest in studying the off-diagonal ordered Ramsey numbers.
Conlon, Fox, Lee and Sudakov~\cite{conlon} proved that there exist ordered matchings $M^<$ on $n$ vertices such that $r_<(M^<,K^<_3) = \Omega\left(\left(\frac{n}{\log n}\right) ^{\frac{4}{3}}\right)$. On the other hand, the best known upper bound on $r_<(M^<,K^<_3)$ is
\[r_<(M^<,K^<_3) \leq r_<(K^<_n,K^<_3)=r(K_n,K_3) = O\left(  \frac{n^2}{\log n} \right),\]
which follows from the well-known result $r(K_n,K_3) = O\left(\frac{n^2}{\log n} \right)$~\cite{odhadtrojuhelnik}, which is tight~\cite{odhadtrojuhelnik2}. 
Note that the first inequality only uses the fact that $M^<$ is an ordered subgraph of $K^<_n$ and does not utilize any special properties of ordered matchings such as its sparseness.
Conlon, Fox, Lee and Sudakov~\cite{conlon} expect that
the upper bound is far from being optimal and posed the following problem.

\begin{problem}[\cite{conlon}]
\label{subquad}
Does there exist an $\varepsilon>0$ such that any ordered matching $M^<$ on $n \in \mathbb{N}$ vertices satisfies
$r_<(M^<,K^<_3) = O\left( n^{2-\varepsilon}\right)$?
\end{problem}

Problem~\ref{subquad} remains open, but there was some progress obtained by Rohatgi~\cite{rohatgi}, who resolved some special cases of this problem.
In particular, he proved that if the edges of an ordered matching $M^<$ do not cross, then the ordered Ramsey number $r_<(M^<,K^<_3)$ is almost linear.
The basic building block of the proof of this result is formed by so-called nested matchings.
For $n\in\mathbb{N}$, a \emph{nested matching} (or a \emph{rainbow}) $NM^<_n$ is the ordered matching on $2n$ vertices with edges $\{i,2n-i+1\}$ for every $i \in [n]$.

Rohatgi~\cite{rohatgi} determined the off-diagonal ordered Ramsey numbers of nested matchings up to a constant factor.

\begin{proposition}[\cite{rohatgi}]
\label{nm}
For every $n\in\mathbb{N}$, we have
$$4n-1 \leq r_<(NM^<_n,K^<_3) \leq 6n.$$
\end{proposition}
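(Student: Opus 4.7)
My plan is as follows.

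For the lower bound $r_<(NM^<_n, K^<_3) \geq 4n-1$, I would give an explicit construction: partition $[4n-2]$ into two consecutive blocks $A=[1,2n-1]$ and $B=[2n,4n-2]$ of $2n-1$ vertices each, color all edges within $A$ and all edges within $B$ red, and color all edges between $A$ and $B$ blue. The blue graph is bipartite, hence triangle-free. In any red copy of $NM^<_n$ the outermost edge joins the minimum and maximum vertices of the matching, and since all cross-block edges are blue, these two endpoints must lie in the same block; the whole nested matching is then confined to a single block of size $2n-1$, too small to accommodate the $2n$ vertices of $NM^<_n$.

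For the upper bound $r_<(NM^<_n, K^<_3) \leq 6n$, I would build the matching iteratively from the outside in, with controlled ``waste'' per step. The starting observation is that in any 2-coloring of $K^<_{6n}$ with no blue triangle, the blue neighborhood $B(v)$ of each vertex $v$ induces a red clique (a blue edge in $B(v)$ would complete a blue triangle with $v$), and so $|B(v)| \leq 2n-1$ — otherwise a red clique of size $\geq 2n$ inside $B(v)$ would already contain a red $NM^<_n$. In particular every vertex has red degree at least $4n$.

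Maintaining an active interval $[\ell,r]$, initially $[1,6n]$, at each step I would search for a red edge $\{a,b\}$ with $a\in[\ell,\ell+4]$, $b\in[r-4,r]$, and total waste $(a-\ell)+(r-b)\leq 4$. If such an edge exists, I use it as the next outer layer of $NM^<_n$ and recurse on $[a+1,b-1]$. Each successful step consumes at most $6$ vertices of $[\ell,r]$ (two used, at most four wasted), so $n$ steps fit inside the initial $6n$ vertices and produce a red $NM^<_n$.

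The main obstacle is the failure case. If no red edge with waste at most $4$ exists, then all $15$ candidate edges are blue, and iteratively applying the no-blue-triangle condition to triples sharing one common vertex forces both $\{\ell,\ldots,\ell+4\}$ and $\{r-4,\ldots,r\}$ to span red $K_5$'s. I would exploit these dense extremal cliques to extract the missing layers: the internal $NM^<_2$ inside a red $K_5$ supplies two near-end layers which, together with a potentially red ``wide'' edge such as $\{\ell+4,r-4\}$ (whose waste exceeds $4$ and is thus not constrained to be blue), can be grafted into the overall nested matching. The delicate part is to set up the recursion so that these extra layers nest consistently with the outer layers still to be built, giving a bound of the form $g(n)\leq \max\{g(n-1)+6,\,g(n-2)+10\}\leq 6n$. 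Handling this failure case and verifying the nesting compatibility is the core technical step; it likely requires a slightly strengthened inductive statement that tracks the positions available for the outer layers.
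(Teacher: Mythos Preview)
Your lower-bound construction is correct and is exactly the standard two-red-cliques coloring (this is the Chv\'atal--Harary construction the paper uses for all lower bounds of this type; see the proof of Theorem~\ref{thm:linearni_matchingy}).

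Your upper-bound argument, however, has a genuine gap precisely where you flag it. In the failure case you correctly deduce that both $\{\ell,\dots,\ell+4\}$ and $\{r-4,\dots,r\}$ span red $K_5$'s, but your proposed use of these cliques does not work. A red $NM^<_2$ sitting entirely inside the left $K_5$ can only serve as the \emph{innermost} two layers of the final nested matching; once you commit to it, no further layers can be inserted between it and the $k$ outer layers already built, so the recursion dies. The alternative---finding two nested red cross-edges such as $\{\ell+3,r-3\}\supset\{\ell+4,r-4\}$---is not forced: nothing prevents \emph{all} $25$ edges between $\{\ell,\dots,\ell+4\}$ and $\{r-4,\dots,r\}$ from being blue simultaneously (the two blocks are red cliques, so no blue triangle arises). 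In that scenario you consume $10$ vertices and gain zero layers, and the recursion $g(n)\le g(n-2)+10$ is simply false. No mild strengthening of the inductive hypothesis repairs this, because the obstruction is structural: the greedy outside-in scheme can be blocked indefinitely by a complete blue bipartite pattern between short end-intervals.

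The paper does not reprove Proposition~\ref{nm} (it is cited from Rohatgi), but its proof of the sharper Theorem~\ref{prop:nm5.3} shows the approach that actually works and would also yield the $6n$ bound with no case analysis: bound the number of red edges globally via the anti-diagonal count (Lemma~\ref{lemma:disjoint_paths}), bound the number of blue edges via your own observation that every blue degree is at most $2n-1$, and compare the sum to $\binom{N}{2}$. This double edge-count gives a quadratic inequality in $N$ whose solution is below $(3+\sqrt5)n+1<6n$. You already have the blue-degree ingredient; what your proposal is missing is the red-edge extremal bound, which replaces the fragile greedy construction entirely.
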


He believed that the upper bound is far from optimal and posed the following conjecture, which he verified for $n \in \{1,2,3\}$.

\begin{conjecture}[\cite{rohatgi}]
\label{nmconj}
For every $n\in\mathbb{N}$, we have
\[r_<(NM^<_n,K^<_3) = 4n-1.\]
\end{conjecture}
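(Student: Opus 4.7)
The conjecture bundles two claims: the lower bound $r_<(NM^<_n,K^<_3) \geq 4n-1$, which is already established in Proposition~\ref{nm}, and the matching upper bound $r_<(NM^<_n,K^<_3) \leq 4n-1$. My plan is to attack the upper bound by induction on $n$, starting from the base cases $n \in \{1,2,3\}$ verified by Rohatgi. Fix $n \geq 4$, set $N = 4n-1$, and consider an arbitrary red/blue coloring of $E(K^<_N)$ containing no blue copy of $K^<_3$; the goal is to extract a red $NM^<_n$.

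The driving idea is to isolate the outermost edge of the target nested matching and nest a smaller one inside. Consider the extreme vertices $1$ and $N$. If $\{1,N\}$ is red, then the interior $\{2,\dots,N-1\}$ has $4n-3 \geq 4(n-1)-1$ vertices and still carries no blue triangle, so by the inductive hypothesis there is a red $NM^<_{n-1}$ on some $2n-2$ of them; prepending $1$ and appending $N$ yields a red $NM^<_n$, with two vertices to spare. The delicate case is when $\{1,N\}$ is blue. Here I would replace the outermost pair by $\{1,r\}$ and $\{\ell,N\}$, where $r$ is the largest red neighbor of $1$ and $\ell$ the smallest red neighbor of $N$. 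If $r \geq N-2$ or $\ell \leq 3$, induction still goes through on the appropriate interior. Otherwise, every vertex in $\{r+1,\dots,N\}$ sends a blue edge to $1$, so (by triangle-freeness of blue) these vertices form a red clique; if this clique already has size at least $2n$ we are done, and symmetrically for the bottom block determined by $\ell$.

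The main obstacle, and the point where I expect the argument to strain, is the mixed case: vertex $1$ sends blue edges to a modest top block $T = \{r+1,\dots,N\}$ of size between $3$ and $2n-1$, vertex $N$ sends blue edges to a comparable bottom block $B = \{2,\dots,\ell-1\}$, and neither block alone yields enough room to complete $NM^<_n$ while denying the interior enough length to invoke the inductive hypothesis. The tools I would combine here are: the pigeonhole fact that any vertex of blue degree at least $2n$ forces a red $K_{2n} \supseteq NM^<_n$, so we may assume every blue degree is at most $2n-1$; the Tur\'an bound on the triangle-free blue subgraph, forcing many red edges overall; and the observation that $B$ and $T$ are themselves red cliques, so any red edge between them can serve as the outermost edge with further nested red edges drawn from within $B$, $T$, and the middle segment $\{\ell,\dots,r\}$.

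Honest self-assessment: the slack of exactly two vertices between $4n-3$ and $4n-5$ that trivialized the red outermost case is a thin cushion once the outer edge must be drawn from slightly interior vertices, and I expect the induction not to close for balanced mixed configurations of the above type. If the plan breaks, it breaks by exhibiting precisely the colorings one would need to rule out in order to validate the conjecture, and these are therefore the natural candidates for a counterexample construction. Accepting the conjecture as stated would require a genuinely new structural idea beyond outermost-edge induction, for instance an averaging argument over all potential outer pairs rather than only $(1,N)$, or a weighted extension that tracks how many nested edges each candidate outer pair already forces in the interior.
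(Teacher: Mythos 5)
This conjecture is not proved in the paper --- it is \emph{disproved}, and your proposal is an attempt to prove a false statement. Theorem~\ref{thm:nm_counterexamples} shows $r_<(NM^<_4,K^<_3)=16$, $r_<(NM^<_5,K^<_3)=20$, and $r_<(NM^<_n,K^<_3)\geq 4n+1$ for all $n\geq 6$. Concretely, for $n=4$ there exist (exactly $326$, found by exhaustive SAT-based search) red-blue colorings of the edges of $K^<_{15}$ with no red $NM^<_4$ and no blue triangle, and $15=4\cdot 4-1$ is precisely the value of $N$ at which your induction would first need to close. For $n\geq 6$ the paper gives an explicit symmetric coloring of $K^<_{4n}$ whose red edges are covered by $n-1$ pairwise disjoint routes (hence contain no $NM^<_n$ by Lemma~\ref{lem-routes}) and whose blue edges form a triangle-free graph. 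So no amount of refinement of the outermost-edge induction, averaging over outer pairs, or Tur\'an-type counting can rescue the upper bound $4n-1$ for any $n\geq 4$; only the lower bound of Proposition~\ref{nm} survives, and even that is not tight.

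To your credit, your self-assessment is essentially accurate about \emph{where} the argument strains: the ``mixed case'' you describe, in which vertex $1$ and vertex $N$ each send blue edges into moderate-sized blocks that are forced to be red cliques, is structurally very close to the actual extremal colorings. The coloring $\chi$ in the paper is built from large bipartite blue blocks ($S$, $L$, and two small rectangles $R_1$, $R_2$) arranged so that the complementary red graph decomposes into $n-1$ routes; this is exactly the kind of balanced configuration you predicted would defeat the induction. The lesson is that when a two-vertex cushion is all that separates the inductive step from failure, the right next move is to search for the obstructing colorings directly (here, via SAT solvers) rather than to look for a stronger structural idea to force the induction through.
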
 

The ordered graphs that do not contain $NM^<_m$ as an ordered subgraph for some $m \in \mathbb{N}$ are known to be equivalent to so-called \emph{$(m-1)$-queue graphs}~\cite{arched} and, in particular, 1-queue graphs correspond to \emph{arched-leveled-planar graphs}~\cite{arched}.
As we will see, estimating the ordered Ramsey numbers $r_<(NM^<_m,K^<_n)$ is connected to extremal questions about $(m-1)$-queue graphs.
In particular, there is a close connection to the problem of Dujmović and Wood~\cite{dujWoo10} about determining the chromatic number of such graphs.

\begin{problem}[\cite{dujWoo10}]
\label{dujWoodProblem}
What is the maximum chromatic number $\chi_k$ of a $k$-queue graph?
\end{problem} 

Dujmović and Wood~\cite{dujWoo10} note that $\chi_k \in \{2k+1,\dots,4k\}$ and they prove that the lower bound is attainable for $k=1$.

\section{Our results}

In this paper, we also focus on off-diagonal ordered Ramsey numbers.
In particular, we improve and generalize the bounds on $r_<(NM^<_n,K^<_3)$ and we disprove Conjecture~\ref{nmconj}.
We also consider ordered Ramsey numbers $r_<(G^<,K_n)$ for general connected ordered graphs $G^<$ and we introduce the concept of Ramsey goodness of ordered graphs. 
Finally, we pose several new open problems.

\subsection{Nested matchings versus complete graphs}
\label{subsec-nested}

First, we improve the leading constant in the upper bound from Proposition~\ref{nm} and thus show that the bound by Rohatgi is indeed not tight. However, we believe that our estimate can be improved as well.

\begin{thm}
\label{prop:nm5.3}
For every $n\in\mathbb{N}$, we have
\[r_<(NM^<_n,K^<_3)\leq \left(3+\sqrt 5\right)n +1< 5.3n+1.\]
\end{thm}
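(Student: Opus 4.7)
The plan is to prove the bound by induction on $n$, refining Rohatgi's argument (Proposition~\ref{nm}). The base cases for small $n$ follow from Rohatgi's bound $r_<(NM^<_n,K^<_3)\leq 6n$. For the inductive step, consider a $2$-coloring of $K^<_N$ with $N=\lceil (3+\sqrt{5})n+1\rceil$ and no blue triangle; the goal is to find a red $NM^<_n$. The key starting observation is that, since the blue graph is triangle-free, the blue neighbourhood $B(v)$ of every vertex $v$ induces a red clique. Consequently, if any vertex has $|B(v)|\geq 2n$, the red clique on $B(v)$ already contains a red $NM^<_n$, and we are done. We may therefore assume $|B(v)|\leq 2n-1$ for every $v$.

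Set $c=3+\sqrt{5}$ and define the loss of a red edge $(a,b)$ as $\ell(a,b)=(a-1)+(N-b)$. The natural recursion is: if there is a red edge with $\ell(a,b)\leq c-2$, use it as the outermost edge of the matching and apply the induction hypothesis to the interior window $[a+1,b-1]$, whose size is at least $(3+\sqrt{5})(n-1)+1$. Hence the inductive step reduces to the structural case in which every edge with large span $b-a\geq N-c+1$ is blue. In this structural case, the triangle-free condition applied to triples $(i,j,w)$ with $i<j$ in the leftmost roughly $c-1$ positions and $w$ in the rightmost roughly $c-1$ positions forces $(i,j)$ red (both $(i,w)$ and $(j,w)$ are blue), and symmetrically on the right. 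Thus both the leftmost and the rightmost $\lceil c-1\rceil$ vertices span red cliques.

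The plan is then to combine these two boundary red cliques with an inductive application to the middle portion, which contains at least $(3+\sqrt{5})(n-2)+1$ vertices and therefore a red $NM^<_{n-2}$, and to complete $NM^<_n$ by supplying two additional nested red outer edges joining the two boundary cliques (these must have short span and so lie outside the forced-blue region). The constant $c=3+\sqrt{5}$ then emerges as the positive root of $c^2-6c+4=0$, obtained by balancing the recursion $f(n)\leq f(n-1)+c$ in the easy case against the boundary-cliques-plus-interior assembly in the structural case.

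The hardest part will be the structural case---specifically, verifying that the required pair of nested red outer edges between the two boundary cliques must exist. If they did not, a further iteration of the triangle-freeness argument would propagate additional blue edges near the corners, eventually forcing some vertex $v$ to acquire $|B(v)|\geq 2n$; but then the red clique on $B(v)$ already contains $NM^<_n$, contradicting our assumption and closing the induction.
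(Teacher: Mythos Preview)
Your inductive plan is a genuinely different route from the paper's, and the easy case (a red edge with loss at most $c-2$) together with the boundary red cliques is set up correctly. But the structural case has a real gap that I do not see how to close along the lines you sketch. You need two nested red edges straddling the middle $NM^<_{n-2}$, with left endpoints in the leftmost $\lfloor c-1\rfloor\le 5$ vertices and right endpoints in the rightmost $\lfloor c-1\rfloor$ vertices. Only some of the $L$--$R$ edges lie outside the forced-blue region, and even if all non-forced $L$--$R$ edges were red, a bipartite pattern on two blocks of size at most $5$ can easily avoid $NM^<_2$ (any ``staircase'' with up to $|L|+|R|-1$ edges does). Your fallback---that if the two nested red edges fail to exist then triangle-freeness propagates enough blue edges to push some vertex to blue degree $\ge 2n$---is not credible as stated: the whole boundary region involves only $O(c)=O(1)$ vertices and edges, while the blue-degree threshold is $2n$, which is arbitrarily large. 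There is no mechanism in the argument that converts a constant-size obstruction into $\Omega(n)$ extra blue edges at a single vertex. Relatedly, you assert that $c=3+\sqrt{5}$ arises from $c^2-6c+4=0$ via ``balancing,'' but the structural branch as written does not produce any recurrence that yields this equation.

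For comparison, the paper's proof avoids induction entirely and is a two-line counting argument. It uses the extremal bound (Dujmovi\'c--Wood) that an $NM^<_n$-free ordered graph on $N$ vertices has at most $(n-1)(2N-2n+1)$ edges, together with your own observation that every vertex has blue degree at most $2n-1$ (else done), hence fewer than $nN$ blue edges in total. Comparing with $\binom{N}{2}$ gives a quadratic inequality in $N$ whose larger root is $(3+\sqrt{5})n$. The extremal edge bound is the ingredient your approach is missing; once you have it, no case analysis is needed.
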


Next, we disprove Conjecture~\ref{nmconj} by showing $r_<(NM^<_n,K^<_3) > 4n-1$ for every $n \geq 4$.
For $n \in \{4,5\}$, we determine $r_<(NM^<_n,K^<_3)$ exactly.

\begin{thm}
\label{thm:nm_counterexamples}
For every $n\geq6$, we have \[r_<(NM^<_n,K^<_3) \geq 4n+1.\]
Moreover, $r_<(NM^<_4,K^<_3) = 16$ and $r_<(NM^<_5,K^<_3) = 20$. 
\end{thm}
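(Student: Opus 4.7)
For each of $N\in\{4n,15,19\}$ (corresponding to $n\ge 6$, $n=4$, $n=5$ respectively), we aim to define an explicit red-blue coloring of the edges of $K^<_N$ with neither a blue $K^<_3$ nor a red $NM^<_n$. Observe first that any construction whose blue graph is bipartite saturates at $N=4n-2$: the red complement then contains $K_a\cup K_b$ with $a+b=N$, and any in-order red clique of size at least $2n$ visibly contains $NM^<_n$. Hence our target colorings must use non-bipartite (equivalently, chromatic number at least $3$) triangle-free blue graphs. Natural candidates are ordered blow-ups or refinements of $C_5$, the Petersen graph, or other small triangle-free graphs whose structure obstructs long nested chains in the red complement; for the two small cases $n\in\{4,5\}$, a computer-assisted search over such structured families should also identify witnesses. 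We verify the absence of a red $NM^<_n$ by a case analysis on the position of the outermost pair $\{v_1,u_1\}$ of a hypothetical nested matching, arguing that at least one inner pair is forced into the blue part.

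\textbf{Upper bound for $n\in\{4,5\}$.} To prove $r_<(NM^<_n,K^<_3)\leq 4n$, we would assume a coloring of $K^<_{4n}$ with no blue $K^<_3$ and no red $NM^<_n$ and derive a contradiction, by refining the strategy behind Theorem~\ref{prop:nm5.3}. The plan is to iteratively construct the outer edges of the desired nested matching from the two extremes of $[4n]$ inward. Since the blue graph is triangle-free, every blue neighborhood is independent and therefore induces a red clique; this lets us either exhibit many red edges between the leftmost and rightmost vertices (which can be nested inward), or locate a large blue independent set whose complement already carries a nested matching. Because Theorem~\ref{prop:nm5.3} already achieves the constant $3+\sqrt{5}\approx 5.236$, only a modest tightening is required to reach $4$ in the regime $n\in\{4,5\}$, which we expect to be absorbed into a tailored case analysis at each of the two values.

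\textbf{Main obstacle.} The central difficulty is producing a clean construction realising $r_<(NM^<_n,K^<_3)\geq 4n+1$ uniformly for every $n\geq 6$: the slack over Rohatgi's $4n-2$ is tiny, and many natural triangle-free templates already fail. For instance, a $C_5$-blowup with five blocks of size roughly $4n/5$ admits a red nested matching of size larger than $n$ by mixing intra-class edges with edges between blocks at distance two in $C_5$. Thus the construction will likely demand non-uniform block sizes or a genuinely new ingredient (such as a carefully chosen Cayley-type triangle-free graph or an ad-hoc modification of Rohatgi's coloring by two extra vertices that are matched to carefully chosen colour classes). A secondary difficulty is sharpening the upper bound for $n\in\{4,5\}$: the argument is expected to be bespoke per value of $n$, which is presumably why the theorem does not attempt an exact determination for general $n$.
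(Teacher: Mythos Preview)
Your proposal is not a proof but an outline that explicitly leaves the two essential ingredients unresolved.

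\textbf{Lower bound for $n\ge 6$.} You correctly observe that bipartite blue graphs stall at $4n-2$ and that a uniform $C_5$-blowup fails, but you never produce a working construction; you end by listing candidates and calling this the ``main obstacle''. The paper does not use any algebraic or blow-up template. It writes down an explicit ad-hoc coloring of $K^<_{4n}$: the blue edges are a $(2n-6)\times(2n-6)$ square block $S$, an L-shaped corner $L$, two $3\times 7$ rectangles $R_1,R_2$, and two stray edges $e_1,e_2$. The absence of a red $NM^<_n$ is then certified not by the position-of-the-outermost-pair case analysis you propose, but by the structural Lemma~\ref{lem-routes}: the red graph is covered by $n-1$ pairwise disjoint monotone ``routes'' in the matrix representation, and no single route can carry two nested edges. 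The route cover is exhibited for $n=6$ and extended inductively. The absence of a blue triangle is a short case analysis exploiting that $S\cup L$ is bipartite and that $R_1,R_2$ attach only to vertices with no compatible third blue edge. None of this is guessed from Cayley graphs or the Petersen graph; it is a handcrafted example whose verification hinges on the routes lemma, which your plan does not mention.

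\textbf{Upper bound for $n\in\{4,5\}$.} Your plan to ``modestly tighten'' the $3+\sqrt 5$ counting argument down to $4n$ cannot succeed as stated: any argument of that shape is monotone in $n$, yet for every $n\ge 6$ the theorem itself gives $r_<(NM^<_n,K^<_3)\ge 4n+1$, so a proof of $r_<(NM^<_n,K^<_3)\le 4n$ that does not use $n\le 5$ in an essential, non-asymptotic way would be a contradiction. The paper does not attempt a combinatorial proof here at all; the upper bounds $r_<(NM^<_4,K^<_3)\le 16$ and $r_<(NM^<_5,K^<_3)\le 20$ are established by an exhaustive SAT-solver search. Likewise, the matching lower-bound colorings $\chi_1$ on $15$ vertices and $\chi_2$ on $19$ vertices are explicit (again verified via disjoint routes for the red side), not merely ``expected to be found by computer''.

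In short, both halves of your plan stop exactly where the actual work begins: you need a concrete coloring plus a mechanism (the routes lemma) to certify it, and for the small upper bounds you need either a genuinely $n$-specific argument or, as in the paper, a computer verification.
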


We prove the lower bound $r_<(NM^<_n,K^<_3) \geq 4n+1$ by constructing a specific red-blue coloring of the edges of $K^<_{4n}$ that avoids a red copy of $NM^<_n$ and a blue copy of $K^<_3$. 
To determine $r_<(NM^<_4,K^<_3)$ and $r_<(NM^<_5,K^<_3)$ exactly, we use a computer-assisted proof based on SAT solvers. For more details about the use of SAT solvers for finding avoiding colorings computationally, we refer the reader to the bachelor's thesis of the second author~\cite{bakalarka}. The utility we developed for computing ordered Ramsey numbers $r_<(G^<,H^<)$ for small ordered graphs $G^<$ and $H^<$ is publicly available~\cite{utility}.

By performing the exhaustive computer search, we know that there are only 326 red-blue colorings of the edges of $K^<_{15}$ without a red copy of $NM^<_4$ and a blue copy of $K^<_3$.
They all share the same structure except for 6 red edges that can be switched to blue while not introducing a blue triangle. 
Using the same computer search, we were able to find many red-blue colorings of the edges of $K^<_{19}$ without a red copy of $NM^<_5$ and a blue copy of $K^<_3$, some of which even had certain symmetry properties.
There were no such symmetric colorings on 15 vertices, which suggests that the lower bound on $r_<(NM^<_n,K^<_3)$ might be further improved for larger values of $n$.

Using the lower bounds from Theorem~\ref{thm:nm_counterexamples}, we can address Problem~\ref{dujWoodProblem} about the maximum chromatic number $\chi_k$ of $k$-queue graphs.
In particular, we can improve the lower bound $\chi_k \geq 2k+1$ by $1$ for any $k \geq 3$; see Subsection~\ref{subsec-nm2} for a proof.

\begin{cor}
\label{cor-chromatic}
For every $k \geq 3$, the maximum chromatic number of $k$-queue graphs is at least $2k+2$.
\end{cor}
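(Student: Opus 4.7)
The plan is to extract a suitable $k$-queue graph directly from the extremal colorings witnessing the lower bounds in Theorem~\ref{thm:nm_counterexamples}. The key observation is that, as noted in the introduction, an ordered graph is $k$-queue precisely if it does not contain $NM^<_{k+1}$ as an ordered subgraph, so it suffices to exhibit, for every $k \geq 3$, an ordered graph on many vertices that avoids $NM^<_{k+1}$ and has chromatic number at least $2k+2$.

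First, I would fix $k \geq 3$ and set $n = k+1$, so $n \geq 4$. Theorem~\ref{thm:nm_counterexamples} guarantees the existence of a red-blue coloring of the edges of $K^<_N$ containing neither a red copy of $NM^<_n$ nor a blue copy of $K^<_3$, where $N = 15$ for $k = 3$, $N = 19$ for $k = 4$, and $N = 4n = 4k+4$ for $k \geq 5$ (using $r_<(NM^<_n,K^<_3) \geq 4n+1$). Let $R$ denote the ordered graph on $[N]$ formed by the red edges of such a coloring. Since $R$ contains no $NM^<_{k+1}$, it is a $k$-queue graph.

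Next, I would bound the chromatic number of $R$ from below via its independence number. Every independent set in $R$ induces a clique in the blue graph; since the coloring contains no blue triangle, the independence number of $R$ is at most $2$. Therefore
\[
\chi(R) \;\geq\; \left\lceil \frac{N}{2} \right\rceil.
\]
A direct case check yields $\lceil 15/2 \rceil = 8 = 2 \cdot 3 + 2$ for $k = 3$, $\lceil 19/2 \rceil = 10 = 2 \cdot 4 + 2$ for $k = 4$, and $\lceil (4k+4)/2 \rceil = 2k+2$ for $k \geq 5$. In every case $\chi(R) \geq 2k+2$, completing the argument.

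There is no real obstacle here; the only point that requires care is to match the slightly different values of $N$ provided by Theorem~\ref{thm:nm_counterexamples} (exact values for $n \in \{4,5\}$ versus the lower bound $4n+1$ for $n \geq 6$) to the claimed uniform bound $2k+2$, which the three-line case distinction above handles.
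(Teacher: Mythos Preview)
Your proof is correct and follows essentially the same approach as the paper's: take the red graph $R$ from an extremal coloring supplied by Theorem~\ref{thm:nm_counterexamples}, observe it is a $k$-queue graph with independence number at most~$2$, and conclude $\chi(R)\geq\lceil N/2\rceil\geq 2k+2$. The only cosmetic difference is that the paper avoids your three-case split by observing that Theorem~\ref{thm:nm_counterexamples} uniformly yields $r_<(NM^<_{k+1},K^<_3)>4(k+1)-1$ for all $k\geq 3$, so one may take $N=4k+3$ throughout and compute $\lceil(4k+3)/2\rceil=2k+2$ once.
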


We recall that the maximum chromatic number $\chi_1$ of $1$-queue graphs is $3$~\cite{dujWoo10}.
We use this result to prove the exact formula for the off-diagonal ordered Ramsey numbers $r_<(NM^<_2,K^<_n)$ of nested matchings with two edges.

\begin{thm}
\label{prop:nm2}
For every $n\in\mathbb{N}$, we have $r_<(NM^<_2,K^<_n)=3n-2$.
\end{thm}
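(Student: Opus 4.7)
The plan is to prove the bound in two halves. The central observation is that a red copy of $NM^<_2$ in a coloring of $K^<_N$ is exactly a pair of nested red edges, so avoiding a red $NM^<_2$ is equivalent to the red graph being a $1$-queue graph in the ambient ordering.

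For the upper bound $r_<(NM^<_2, K^<_n) \leq 3n-2$, I would take any red-blue coloring of $E(K^<_{3n-2})$ that contains no red $NM^<_2$. The red graph $R$ is then $1$-queue, so by the result $\chi_1 = 3$ of Dujmović and Wood recalled just above, $R$ admits a proper $3$-coloring. The largest color class is an independent set of $R$ of size at least $\lceil (3n-2)/3 \rceil = n$, and this independent set is a blue clique, giving a blue $K^<_n$.

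For the lower bound $r_<(NM^<_2, K^<_n) \geq 3n-2$, I would exhibit an explicit avoiding coloring of $K^<_{N}$ with $N = 3n-3$: color an edge $\{i,j\}$ with $i < j$ red precisely when $j - i \in \{1,2\}$, and blue otherwise. Two short checks finish the argument. First, no two red edges are nested: for red edges $\{a_1,b_1\}$ and $\{a_2,b_2\}$ with $a_1 < a_2$, one has $b_1 \leq a_1 + 2 \leq a_2 + 1 \leq b_2$, which prevents the nested configuration $a_1 < a_2 < b_2 < b_1$. Second, any independent set in $R$ consists of integers with pairwise gaps at least $3$ in $[3n-3]$, hence has size at most $n-1$, so the blue graph contains no $K^<_n$.

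The whole argument hinges on the equivalence ``no red $NM^<_2$'' $\Longleftrightarrow$ ``red graph is $1$-queue'' together with $\chi_1 = 3$; the only real creative step is guessing the sliding-window red graph as the extremal construction for the lower bound, and I do not anticipate a serious obstacle beyond that.
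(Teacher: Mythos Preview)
Your proposal is correct and essentially mirrors the paper's proof: the upper bound is identical (red graph is $1$-queue, apply the Dujmovi\'{c}--Wood result $\chi_1=3$, pigeonhole yields a blue $K^<_n$), and the lower bound differs only in the choice of extremal coloring. The paper partitions $[3n-3]$ into $n-1$ consecutive red triangles with all cross-edges blue, whereas you take the ``sliding window'' red graph of edge-lengths $1$ and $2$; your construction has strictly more red edges but both are easily seen to avoid a red $NM^<_2$ and a blue $K^<_n$, so neither approach buys anything over the other here.
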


For general nested matchings versus arbitrarily large complete graphs, we can determine the asymptotic growth rate of their ordered Ramsey numbers, generalizing the linear bounds from Proposition~\ref{nm} and Theorem~\ref{prop:nm5.3}.

\begin{thm}
\label{thm:linearni_matchingy}
For every $m,n\in\mathbb{N}$, we have
\[r_<(NM^<_m,K^<_{n+1}) = \Theta(mn).\]
\end{thm}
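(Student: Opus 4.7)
The plan is to prove the two matching bounds separately.

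For the lower bound $r_<(NM^<_m,K^<_{n+1}) \geq n(2m-1)+1$, I would exhibit a red-blue coloring of $K^<_{n(2m-1)}$ avoiding both configurations. Partition $[n(2m-1)]$ into $n$ consecutive intervals $B_1 < B_2 < \cdots < B_n$ of size $2m-1$ each. Color every edge inside some single block red and every edge between two distinct blocks blue. A red copy of $NM^<_m$ would have to lie entirely inside one block, but it uses $2m > 2m-1$ vertices, so no such copy exists. A blue clique, on the other hand, contains at most one vertex per block, so it has size at most $n$, ruling out a blue $K^<_{n+1}$. This gives $r_<(NM^<_m,K^<_{n+1}) > n(2m-1)$, which is $\Omega(mn)$.

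For the upper bound $r_<(NM^<_m,K^<_{n+1}) = O(mn)$, I would combine the equivalence recalled in the paper between ordered graphs avoiding $NM^<_m$ and $(m-1)$-queue graphs with the theorem of Dujmović and Wood that every $k$-queue graph has chromatic number at most $4k$. Consider any red-blue coloring of the edges of $K^<_N$ with no red $NM^<_m$. Then the red subgraph is $(m-1)$-queue and hence $4(m-1)$-colorable (for $m \geq 2$; the case $m=1$ is trivial since then the red graph has no edges). A proper coloring of the red subgraph partitions the vertex set into at most $4(m-1)$ red-independent sets, and each such set is a blue clique in the original coloring. By the pigeonhole principle, one of these blue cliques has size at least $\lceil N/(4(m-1)) \rceil$. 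Choosing $N = 4(m-1)n+1$ therefore forces a blue $K^<_{n+1}$, so $r_<(NM^<_m,K^<_{n+1}) \leq 4(m-1)n+1 = O(mn)$.

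Putting the two bounds together yields $r_<(NM^<_m,K^<_{n+1}) = \Theta(mn)$. There is no real obstacle here: the lower bound is a direct blow-up construction, and the upper bound is essentially a one-line consequence of the Dujmović--Wood chromatic bound on queue graphs once one invokes the queue/nested-matching equivalence. The only place where one could hope for refinement is the leading constant, where one might try to replace the factor $4(m-1)$ by something closer to the true value of $\chi_{m-1}$ (for the triangle case $n=2$, for instance, Theorem~\ref{prop:nm5.3} already gives a better constant than the generic queue-graph bound).
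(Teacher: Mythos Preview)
Your proof is correct, and the lower bound construction is essentially identical to the paper's (the paper states it with $n-1$ blocks avoiding a blue $K^<_n$, but the idea is the same blow-up).

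For the upper bound, however, you take a genuinely different route. The paper argues by edge-counting: it combines Lemma~\ref{lemma:disjoint_paths} (at most $(m-1)(2N-2m+1)$ red edges if there is no red $NM^<_m$) with Tur\'an's theorem (at most $(1-1/n)N^2/2$ blue edges if there is no blue $K^<_{n+1}$), and solves the resulting quadratic inequality in $N$. You instead invoke the Dujmovi\'c--Wood bound $\chi_{m-1}\le 4(m-1)$ directly: the red graph, being $(m-1)$-queue, admits a proper $4(m-1)$-coloring, and pigeonhole on the color classes yields a blue clique of size at least $\lceil N/(4(m-1))\rceil$. This is exactly the mechanism the paper uses in the special case $m=2$ (Theorem~\ref{prop:nm2} via Lemma~\ref{lem-1queue}), so your argument is the natural generalization of that proof rather than of the paper's proof of Theorem~\ref{thm:linearni_matchingy}. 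Your approach is shorter and gives the slightly cleaner explicit bound $4(m-1)n+1$; the paper's edge-counting argument is more self-contained (it does not import the $4k$-colorability result as a black box) and makes the dependence on the extremal edge count transparent, which is what drives the sharper constant in Theorem~\ref{prop:nm5.3}.
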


\subsection{Ramsey goodness for ordered graphs}
\label{subsec:goodness}

To obtain the lower bound $r(G,K_n) \geq (m-1)(n-1)+1$ for a connected graph $G$ on $m$ vertices, one might consider a simple construction that is usually attributed to Chv\'{a}tal and Harary~\cite{chvaHa72}.
Take $n-1$ red cliques, each with $m-1$ vertices, and connect vertices in different red cliques by blue edges. 
For some graphs $G$, this lower bound is the best possible and such graphs are called \emph{(Ramsey) $n$-good}.
That is, a connected graph $G$ on $m$ vertices is $n$-good if $r(G,K_n)=(m-1)(n-1)+1$. We call a graph \emph{good} if it is $n$-good for all $n\in\mathbb{N}$.
A famous result by Chvátal~\cite{good_stromy} states that all trees are good.

Studying $n$-good graphs is a well-established area in extremal combinatorics. 
Despite this, to the best of our knowledge, Ramsey goodness has not been considered for ordered graphs. 
Motivated by our results from Subsection~\ref{subsec-nested}, we thus extend the definition of good graphs to ordered graphs and we attempt to characterize all good connected ordered graphs.
A connected ordered graph $G^<$ on $m$ vertices is \emph{$n$-good} if $r_<(G^<,K^<_n)=(m-1)(n-1)+1$. A connected ordered graph is \emph{good} if it is $n$-good for all $n\in\mathbb{N}$.

A generalization of the well-known Erd\H{o}s--Szekeres theorem on monotone subsequences states that $r_<(P^<_m,K^<_n)=(m-1)(n-1)+1$ for every $n\in\mathbb{N}$ and every \emph{monotone path $P^<_m$}~\cite{choPo02}, which is an ordered path on $m$ vertices where edges connect consecutive vertices in $<$.
In other words, any monotone path is good, which gives a first example of good ordered graphs. 
Note, however, that not all ordered paths are good, which follows immediately from Theorem~\ref{superpolynomial_matchings}.

First, we prove some basic properties of good ordered graphs, some of them resembling their unordered counterparts. 
It can be shown similarly as in the unordered case that if a connected ordered graph $G^<$ is $(n+1)$-good, then it is $n$-good.

Let $G^<$ be an ordered graph containing an ordered cycle $C^<$ (a cycle equipped with a linear vertex ordering) as an ordered subgraph.
It is known that, for every cycle $C_l$ on $l \geq 3$ vertices and for $n$ going to infinity, the Ramsey number $r(C,K_n)$ grows superlinearly with $n$~\cite{neusp_cykly_odhad,odhadtrojuhelnik2}.
Since $r_<(G^<,K^<_n)\geq r_<(C^<,K^<_n) \geq r(C,K_n)$, the number $r_<(G^<,K^<_n)$ is also superlinear in $n$ and thus the ordered graph $G^<$ cannot be good.
We thus obtain the following result that limits good ordered graphs to ordered trees.

\begin{proposition}
\label{thm:with_cycles_not_good}
Every good ordered graph is an ordered tree.
\end{proposition}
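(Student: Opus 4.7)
The plan is to argue by contradiction: assume $G^<$ is a good connected ordered graph that is not an ordered tree, so its underlying unordered graph $G$ contains at least one cycle, and derive that $r_<(G^<,K^<_n)$ cannot be linear in $n$.

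First I would fix a cycle $C \subseteq G$ of some length $l \geq 3$ and observe that, as an ordered subgraph of $G^<$, this cycle inherits a linear order on its vertices, giving an ordered cycle $C^<$ with $C^< \subseteq G^<$. The monotonicity of ordered Ramsey numbers under taking ordered subgraphs then yields $r_<(G^<,K^<_n) \geq r_<(C^<,K^<_n)$, and the general inequality $r_<(H^<,K^<_n) \geq r(H,K_n)$ (coming from the fact that any unordered coloring can be realized by an ordered one) gives $r_<(C^<,K^<_n) \geq r(C_l,K_n)$.

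Next I would invoke the known superlinear lower bounds on $r(C_l,K_n)$ as $n \to \infty$, which are available for every fixed $l \geq 3$ (for triangles from~\cite{odhadtrojuhelnik2}, and for longer cycles from~\cite{neusp_cykly_odhad}); this is exactly the input cited in the paragraph preceding the statement. Combining with the chain of inequalities above, $r_<(G^<,K^<_n)$ is also superlinear in $n$. On the other hand, if $G^<$ on $m$ vertices is good then by definition $r_<(G^<,K^<_n) = (m-1)(n-1)+1$, which is linear in $n$, a contradiction. Hence $G^<$ must be an ordered tree.

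The only point needing a bit of care is the second inequality $r_<(H^<,K^<_n) \geq r(H,K_n)$: any red-blue coloring of the edges of the unordered $K_N$ can be viewed as a coloring of $K^<_N$, and a red ordered copy of $H^<$ (respectively a blue ordered copy of $K^<_n$) is in particular a red copy of $H$ (respectively a blue $K_n$), so an avoiding unordered coloring on $r_<(H^<,K^<_n)-1$ vertices would witness that $r(H,K_n) \leq r_<(H^<,K^<_n)$. I expect no genuine obstacle here; the proof is essentially assembling three standard facts (subgraph monotonicity, the ordered-vs-unordered inequality, and superlinearity of cycle-versus-clique Ramsey numbers) against the linearity forced by goodness.
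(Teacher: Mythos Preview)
Your proposal is correct and follows essentially the same argument as the paper: the paper also uses the chain $r_<(G^<,K^<_n)\geq r_<(C^<,K^<_n) \geq r(C,K_n)$ together with the known superlinear growth of $r(C_l,K_n)$ to conclude that no ordered graph containing a cycle can be good. Your write-up is slightly more detailed (in particular in justifying the inequality $r_<(H^<,K^<_n)\geq r(H,K_n)$), but the structure and the key ingredients are identical.
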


In our attempt to determine which ordered trees are good, we discovered a class of good ordered trees, which significantly extends the example with monotone paths.
In order to describe this new class, we need to introduce some notation.

An \emph{ordered star graph} $S^<_{l,r}$ is an ordered graph on $r+l-1$ vertices such that the $l$th vertex in the vertex ordering is adjacent to all other vertices and there are no other edges; see part~(a) of Figure~\ref{fig:caterpillar}.
We call an ordered star $S^<_{l,r}$ \emph{one-sided}, if $l=1$ or $r=1$.

For any two ordered graphs $G^<$ and $H^<$ on $m$ and $n$ vertices, respectively, the \emph{join} $G^< + H^<$ is an ordered graph on $m+n -1$ vertices constructed by identifying the leftmost vertex of $H^<$ with the rightmost vertex of $G^<$; see part~(b) of Figure~\ref{fig:caterpillar}.
The join operation is associative and if $G^<$ and $H^<$ are both connected, then $G^< + H^<$ is connected as well.

\begin{figure}[ht]
\centering
\includegraphics{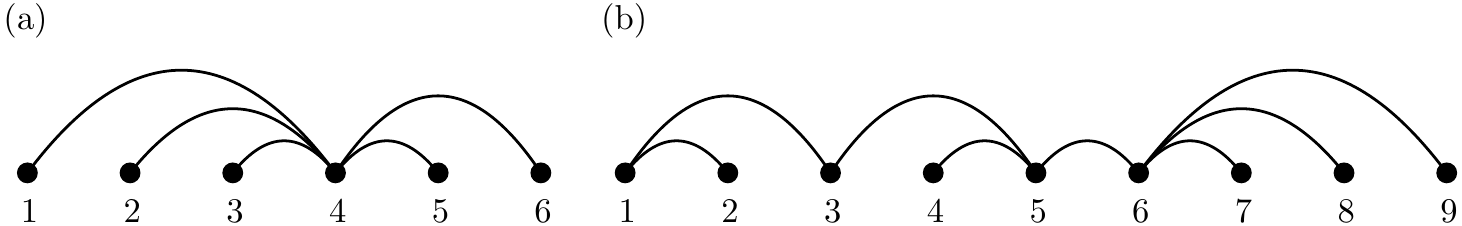}
\caption{(a) The ordered star graph $S_{4,3}^<$. (b) An example of a monotone caterpillar graph $S^<_{1,3}+S^<_{3,1}+S^<_{1,2}+S^<_{1,4}$. Note that this ordered graph can be also written as, for example, $S^<_{1,3} + S^<_{3,2}+S^<_{1,4}$.}
\label{fig:caterpillar}
\end{figure}

The following result gives a construction of good ordered graphs based on the join operation.

\begin{thm}
\label{thm:expansion}
For all $n, r,l\in\mathbb{N}$, if a connected ordered graph $G^<$ is $n$-good, then the ordered graphs $G^< + S^<_{1,r}$, $G^< + S^<_{l,1}$, $S^<_{l,1}+G^<$, and $S^<_{1,r}+G^<$ are also $n$-good.
\end{thm}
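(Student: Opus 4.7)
The lower bound in all four cases is the classical Chvátal--Harary construction: form $n-1$ vertex-disjoint red cliques on $m+r-2$ (resp.\ $m+l-2$) vertices and colour every remaining edge blue. Each of the four graphs is connected on $m+r-1$ (resp.\ $m+l-1$) vertices, so no red copy fits inside a single clique, and the blue graph is $(n-1)$-partite and hence $K^<_n$-free.

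For the upper bound I would first reduce the four cases to two by reversal symmetry. The map $v_i\mapsto v_{N+1-i}$ preserves ordered Ramsey numbers and satisfies $(A^< + B^<)^{\mathrm{rev}} = (B^<)^{\mathrm{rev}} + (A^<)^{\mathrm{rev}}$ together with $(S^<_{l,1})^{\mathrm{rev}} = S^<_{1,l}$. Since $(G^<)^{\mathrm{rev}}$ is $n$-good whenever $G^<$ is, the $n$-goodness of $G^< + S^<_{l,1}$ reduces to that of $S^<_{1,l} + (G^<)^{\mathrm{rev}}$, and the $n$-goodness of $S^<_{l,1} + G^<$ reduces to that of $(G^<)^{\mathrm{rev}} + S^<_{1,l}$. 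So it suffices to show that $G^< + S^<_{1,r}$ and $S^<_{1,r} + G^<$ are $n$-good.

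For $G^< + S^<_{1,r}$, set $N=(m+r-2)(n-1)+1$, fix a red-blue colouring of $K^<_N$ with no blue $K^<_n$, and suppose there is no red $G^< + S^<_{1,r}$. Define
\[U = \{u \in [N] : \text{some red copy of } G^< \text{ in } [1,u] \text{ has rightmost vertex } u\}.\]
Each $u\in U$ has at most $r-2$ red right-neighbours in $[u+1,N]$, for otherwise adjoining $r-1$ of them as star leaves produces the forbidden copy. Restricting to $U$, the induced red subgraph has at most $r-2$ red right-neighbours per vertex. Starting from the leftmost element of $U$ and greedily restricting to blue right-neighbours inside $U$, each step loses at most $r-1$ vertices from the pool, so whenever $|U|\geq(n-1)(r-1)+1$ the procedure runs for $n$ steps and yields a blue $K^<_n$ inside $U$---a contradiction. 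Hence $|U|\leq(r-1)(n-1)$, and $|[N]\setminus U|\geq(m-1)(n-1)+1$. The $n$-goodness of $G^<$ applied to $[N]\setminus U$ produces a red $G^<$ whose rightmost vertex lies in $[N]\setminus U$ yet must belong to $U$ by definition---contradiction.

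The case $S^<_{1,r} + G^<$ runs analogously with
\[U' = \{p \in [N] : \text{some red copy of } G^< \text{ in } [p,N] \text{ has leftmost vertex } p\}.\]
Absence of a red $S^<_{1,r} + G^<$ translates into: for every $p\in U'$ and every $q<p$ with $\{q,p\}$ red, $q$ has at most $r-2$ red right-neighbours in $[q+1,p]$. Applied to pairs $p_i<p_j$ in $U'$ with $\{p_i,p_j\}$ red, this forces $p_j$ to appear among the first $r-2$ red right-neighbours of $p_i$; hence each vertex of $U'$ again has at most $r-2$ red right-neighbours inside $U'$, and the remainder is identical. The main obstacle I expect is precisely this bookkeeping step: turning ``no red $S^<_{1,r} + G^<$'' into the clean structural bound on red right-degrees within $U'$ requires a careful choice of $q$ and $p$ and the right interpretation of the resulting constraint relative to $U'$.
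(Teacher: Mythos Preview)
Your proposal is correct and follows essentially the same approach as the paper. Both arguments reduce the four cases to two by reversal symmetry, define the set of ``endpoints'' of red copies of $G^<$, bound the red degree inside that set using the absence of the joined graph, and then run a greedy blue-clique extraction. The only cosmetic differences are: (i) the paper reduces to $G^< + S^<_{1,r}$ and $G^< + S^<_{l,1}$ (both appended on the right) whereas you reduce to $G^< + S^<_{1,r}$ and $S^<_{1,r} + G^<$; (ii) the paper establishes $|W|\ge (n-1)(r-1)+1$ via an explicit iterative lemma (find a red $G^<$, delete its rightmost vertex, repeat), while you obtain it by the equivalent complement argument on $[N]\setminus U$; and (iii) because of the choice in (i), the paper's second case gives the red left-degree bound inside $W$ directly, while your second case needs the extra observation that any red right-neighbour $p_j\in U'$ of $p_i$ must be among the first $r-2$ red right-neighbours of $p_i$---which you supply correctly.
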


Theorem~\ref{thm:expansion} immediately implies that every ordered star graph is good. 
More generally, it follows that all ordered trees from the following class are good.
An ordered graph $G^<$ is a \emph{monotone caterpillar graph} if there exist positive integers $n,l_1,\dots,l_n, r_1,\dots,r_n$ such that $l_i=1$ or $r_i=1$ for each $i \in [n]$ and
$G^< = S^<_{l_1,r_1} + \dots +S^<_{l_n,r_n}$.
In other words, if $G^<$ can be obtained by performing joins on one-sided ordered star graphs; see part~(b) of Figure~\ref{fig:caterpillar}.
Note that monotone paths and ordered stars are all monotone caterpillar graphs.

\begin{cor}
All monotone caterpillar graphs are good.
\end{cor}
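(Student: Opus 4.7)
The plan is to proceed by induction on the number $k$ of one-sided stars in the join decomposition $G^< = S^<_{l_1,r_1} + \cdots + S^<_{l_k,r_k}$ of the monotone caterpillar $G^<$, using Theorem~\ref{thm:expansion} at each inductive step.

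For the base case $k=1$, the graph $G^<$ is a single one-sided star, which is good by the remark immediately preceding the corollary. To see this formally, one starts from the trivial graph $K_1$, which is $n$-good for every $n \in \mathbb{N}$ because any nonempty ordered graph contains a red copy of $K_1$ and hence $r_<(K_1, K^<_n)=1=(1-1)(n-1)+1$. Since joining $K_1$ on the left of a one-sided star identifies the single vertex of $K_1$ with the leftmost vertex of the star and introduces no new edges, one has $K_1 + S^<_{1,r} = S^<_{1,r}$ and $K_1 + S^<_{l,1} = S^<_{l,1}$; Theorem~\ref{thm:expansion} then yields that every one-sided star is $n$-good for every $n$.

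For the inductive step, fix $n \in \mathbb{N}$ and assume that every monotone caterpillar obtained as a join of at most $k$ one-sided stars is $n$-good. Given $G^< = S^<_{l_1,r_1} + \cdots + S^<_{l_{k+1},r_{k+1}}$, the associativity of the join allows one to write $G^< = H^< + S^<_{l_{k+1},r_{k+1}}$, where $H^< = S^<_{l_1,r_1} + \cdots + S^<_{l_k,r_k}$ is a monotone caterpillar with $k$ one-sided stars and is therefore $n$-good by the inductive hypothesis. Since $S^<_{l_{k+1},r_{k+1}}$ is one-sided, Theorem~\ref{thm:expansion} applies in the form $H^< + S^<_{1,r_{k+1}}$ (if $l_{k+1}=1$) or $H^< + S^<_{l_{k+1},1}$ (if $r_{k+1}=1$), and gives that $G^<$ is $n$-good. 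Since $n$ was arbitrary, $G^<$ is good.

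The only place that requires any real care is the base case, namely the verification that a one-sided star is itself good; once that fact is in hand, the rest of the proof is a pure iteration of Theorem~\ref{thm:expansion} and presents no further difficulty.
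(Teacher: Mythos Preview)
Your proof is correct and is essentially a detailed write-up of the argument the paper leaves implicit, namely iterating Theorem~\ref{thm:expansion} along the join decomposition of the monotone caterpillar. The choice of $K_1$ as the trivially $n$-good seed is legitimate (and the proof of Theorem~\ref{thm:expansion} does not break in this degenerate case); one could equally well start from $K_2 = P^<_2$, whose goodness is recorded earlier via the Erd\H{o}s--Szekeres result.
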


Computer experiments based on our SAT solver based utility~\cite{utility} proved that all good ordered graphs up to 6 vertices are monotone caterpillar graphs.
We believe that there are no other good ordered graphs; see Conjecture~\ref{conj:caterpillar}. 

To get a better understanding of good ordered graphs, we prove an alternative characterization of monotone caterpillar graphs stated in terms of forbidden ordered subgraphs.

\begin{proposition}
\label{prop:forbidden_caterpillar}
A connected ordered graph $G^<$ is a monotone caterpillar graph if and only if $G^<$ does not contain any of the four ordered graphs from Figure~\ref{fig:forbidden_caterpillar} as an ordered subgraph.
\end{proposition}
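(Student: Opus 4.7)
The plan is to prove both implications of the equivalence separately.

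For the forward direction, assume $G^<$ is a monotone caterpillar with decomposition $G^< = S^<_{l_1, r_1} + \dots + S^<_{l_n, r_n}$. Two structural observations drive the argument: (i) every edge of $G^<$ lies entirely within a single one-sided star piece, so edges from distinct pieces are separated in the vertex ordering; and (ii) within any one-sided star all edges share a common center that is either the leftmost or the rightmost vertex of that piece. Combining these two facts, a short case analysis rules out each of the four forbidden configurations of Figure~\ref{fig:forbidden_caterpillar} as an ordered subgraph of $G^<$.

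For the backward direction, which forms the heart of the proof, I would proceed by induction on the number of vertices of $G^<$, with trivial base cases. For the inductive step, assume $G^<$ is connected on at least two vertices and avoids all four forbidden subgraphs. First I would argue that $G^<$ is a tree: any cycle of length at least $4$ in an ordered graph contains two disjoint edges that are nested or crossing (as can be seen by examining the cycle edges incident to its smallest vertex), so the forbidden $NM^<_2$, the forbidden crossing matching, and the forbidden $K^<_3$ together exclude all cycles. Next, let $v^* := \max\{u \colon \{1, u\} \in E(G^<)\}$ be the largest neighbor of the leftmost vertex, and let $A := \{u \in (1, v^*) \colon \{1, u\} \in E(G^<)\}$ and $B := \{u \in (1, v^*) \colon \{u, v^*\} \in E(G^<)\}$. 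The absence of the crossing, nested, and triangle configurations forces every interior vertex $u \in (1, v^*)$ to have degree exactly one in $G^<$ with its unique neighbor in $\{1, v^*\}$; and the absence of the crossing matching applied to edges $\{1, a\}$ with $a \in A$ and $\{b, v^*\}$ with $b \in B$ further forces $a < b$ whenever both exist, so $A$ is a prefix and $B$ is a suffix of $(1, v^*)$.

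The crucial \emph{one-sidedness} step is the claim that at least one of $A$ and $B$ is empty, so that the induced subgraph on $\{1, \dots, v^*\}$ is a one-sided ordered star $S^<_{l_1, r_1}$. This is exactly where the fourth forbidden subgraph enters: if both sets were nonempty, choosing any $a \in A$ and $b \in B$ with $a < b$, together with the edge $\{1, v^*\}$, would exhibit a forbidden ordered pattern on the four vertices $\{1, a, b, v^*\}$. Once the first piece is isolated, any edge with one endpoint strictly in $(1, v^*)$ and the other strictly beyond $v^*$ would cross $\{1, v^*\}$, so the induced subgraph on $\{v^*, v^*+1, \dots, |V(G^<)|\}$ is connected, still avoids the four forbidden subgraphs, and has strictly fewer vertices. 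By the induction hypothesis it is a monotone caterpillar, and joining its decomposition with $S^<_{l_1, r_1}$ at $v^*$ completes the proof. The main obstacle is the one-sidedness step, which relies on identifying the fourth forbidden subgraph as the specific four-vertex ordered tree that rules out simultaneous two-sided attachment of interior leaves.
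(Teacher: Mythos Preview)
Your proposal is correct and follows essentially the same strategy as the paper, just mirrored: the paper peels off a one-sided star from the \emph{right} end by taking $v$ to be the leftmost neighbour of the last vertex $n+1$ and applying induction to the subgraph on $\{1,\dots,v\}$, whereas you peel from the \emph{left} end by taking $v^*$ to be the rightmost neighbour of vertex~$1$ and applying induction to the subgraph on $\{v^*,\dots,|V|\}$. In both versions the forbidden patterns $A=NM^<_2$, $B$ (crossing), and $D=K^<_3$ force every interior vertex of the end interval to be a leaf attached to one of the two extreme vertices, and the remaining pattern $C$ (edges $\{1,2\},\{3,4\},\{1,4\}$ on $1<2<3<4$) is exactly what excludes simultaneous two-sided attachment, i.e.\ your $A\neq\emptyset\neq B$ case. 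Your preliminary observation that $G^<$ must be a tree is correct and makes the connectivity of the residual piece transparent; the paper omits this step and argues connectivity implicitly, but the substance is the same.
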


\begin{figure}[ht]
\centering
\includegraphics{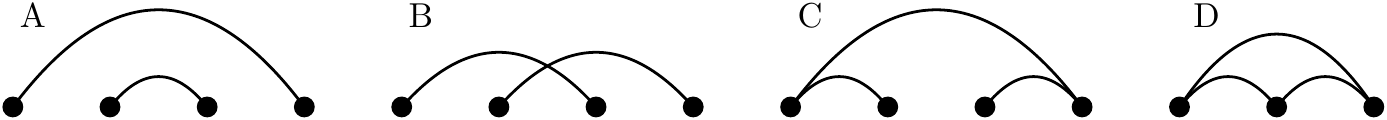}
\caption{Any ordered graph that does not contain any of these four ordered graphs as an ordered subgraph is a monotone caterpillar graph.}
\label{fig:forbidden_caterpillar}
\end{figure}

We note that if we assume that $G^<$ is an ordered tree, then we can leave out $D$ in Figure~\ref{fig:forbidden_caterpillar} and the characterization still holds.
It follows from Proposition~\ref{prop:forbidden_caterpillar} that monotone caterpillar graphs can be also characterized using nine forbidden \emph{connected} ordered subgraphs.
It suffices to extend the two disconnected ordered graphs $A$ and $B$ from Figure~\ref{fig:forbidden_caterpillar} into connected ordered graphs using a simple case analysis.

\begin{cor}
\label{cor:caterpillar_char}
An ordered tree $G^<$ is a monotone caterpillar graph if and only if it does not contain any ordered graph from Figure~\ref{fig:forbidden_caterpillar2} as an ordered subgraph.
\end{cor}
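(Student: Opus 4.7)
The plan is to derive the corollary from Proposition~\ref{prop:forbidden_caterpillar} by a finite case analysis that replaces the disconnected forbidden patterns with their minimal connected extensions inside an ordered tree. First, I would invoke the remark preceding the corollary to drop the pattern $D$: since a tree cannot contain $D$ as an ordered subgraph, an ordered tree is a monotone caterpillar graph if and only if it avoids the three patterns $A$, $B$, and $C$ from Figure~\ref{fig:forbidden_caterpillar}. The graph $C$ is already connected and appears unchanged among the nine graphs of Figure~\ref{fig:forbidden_caterpillar2}, so the task reduces to showing that the remaining eight graphs in Figure~\ref{fig:forbidden_caterpillar2} are exactly the minimal connected ordered supergraphs of $A$ and $B$ that can occur inside an ordered tree.

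For the ``only if'' direction, I would observe that each of the nine graphs in Figure~\ref{fig:forbidden_caterpillar2} is either $C$ itself or an extension of $A$ or $B$ by a short connecting path, and thus each contains $A$, $B$, or $C$ as an ordered subgraph. Hence if an ordered tree $G^<$ is a monotone caterpillar graph, then by Proposition~\ref{prop:forbidden_caterpillar} it avoids $A$, $B$, and $C$, and therefore it avoids all nine graphs from Figure~\ref{fig:forbidden_caterpillar2}.

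For the ``if'' direction, I would take an ordered tree $G^<$ that is not a monotone caterpillar graph. By Proposition~\ref{prop:forbidden_caterpillar}, $G^<$ contains $A$, $B$, or $C$. If it contains $C$, we are done. Otherwise, let $X^< \in \{A, B\}$ be such a pattern embedded in $G^<$ and consider its two components. Since $G^<$ is a tree, there is a unique $G^<$-path $P$ connecting these components; by choosing the embedding of $X^<$ so that $|V(P)|$ is minimum, $P$ is internally disjoint from $V(X^<)$ and adds only a small number of bridging vertices. The key step is a short case analysis on the linear-order positions of the endpoints and internal vertices of $P$ relative to $V(X^<)$; each admissible positioning yields a connected ordered subgraph of $G^<$ which I need to identify with one of the eight remaining graphs in Figure~\ref{fig:forbidden_caterpillar2}.

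The main obstacle is precisely this last enumeration: one has to list all linear orderings of the additional vertices relative to the vertices of $A$ or $B$ that are realizable by a tree-path, and verify both that each such ordering produces a graph in Figure~\ref{fig:forbidden_caterpillar2} and that every graph in Figure~\ref{fig:forbidden_caterpillar2} arises this way. The count should split naturally according to the symmetries of $A$ and $B$, but care is needed to avoid double-counting and to rule out configurations in which a shorter bridging path would have sufficed, in which case the resulting graph is already covered by a smaller pattern on the list.
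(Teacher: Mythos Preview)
Your approach is the same as the paper's: both derive the corollary from Proposition~\ref{prop:forbidden_caterpillar} by dropping $D$ (trees contain no triangle), keeping the already-connected $C$, and replacing the two disconnected patterns $A$ and $B$ by their connected extensions through a case analysis. The paper's argument is in fact only the two sentences preceding the corollary and gives less detail than you do.

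One point in your plan needs to be made explicit before the case analysis becomes finite. You allow internal vertices on the bridging path $P$ and speak of ruling out configurations ``in which a shorter bridging path would have sufficed'', but this reduction should be stated up front and drives everything: if $P$ has an internal vertex, then its first edge (leaving one component of the embedded pattern) together with the edge forming the other component is a pair of vertex-disjoint edges in $G^<$, hence itself a copy of $A$ or of $B$, and the bridge for this new copy is strictly shorter. For this to yield a contradiction you must minimise $|V(P)|$ over embeddings of $A$ \emph{and} $B$ jointly, not over a fixed $X^<$, since shortening an $A$-embedding may produce a $B$-embedding and vice versa. Once you know the minimal bridge is a single edge, the four vertices of the pattern together with that edge form an ordered tree on four vertices containing $A$ or $B$; by Proposition~\ref{prop:forbidden_caterpillar} it is not a monotone caterpillar, and there are exactly $4+4=8$ such trees (four one-edge connected extensions of $A$, four of $B$, all distinct), which together with $C$ give the nine graphs of Figure~\ref{fig:forbidden_caterpillar2}. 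With this observation in place, no enumeration over positions of internal vertices is needed.
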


\begin{figure}[ht]
\centering
\includegraphics{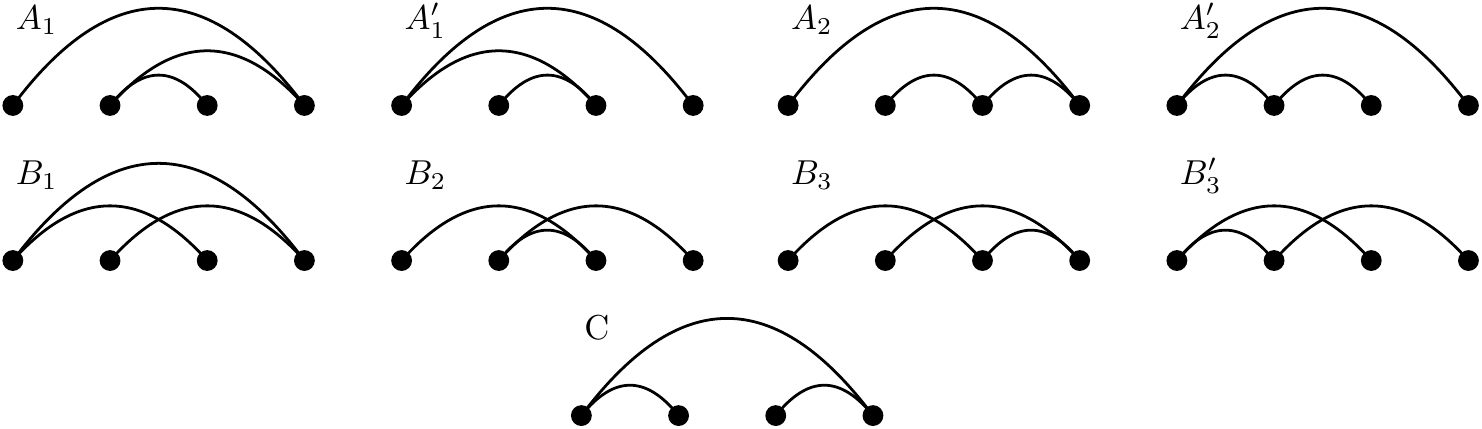}
\caption{Forbidden connected ordered subgraphs for monotone caterpillar graphs. Also, these are all ordered trees on four vertices that are not monotone caterpillar graphs.}
\label{fig:forbidden_caterpillar2}
\end{figure}

It follows from our computer-assisted proofs that none of the nine ordered graphs from Figure~\ref{fig:forbidden_caterpillar2} is good.

\subsection{Open problems}

By Theorem~\ref{thm:nm_counterexamples}, the value of $r_<(NM^<_m,K^<_n)$ does not match the value of the ordered Ramsey number of an $n$-good ordered graph on $2m$ vertices if $m \geq 4$ already in the case $n=3$.
However, Theorem~\ref{prop:nm2} implies that the two values are the same for $m=2$ and every $n \geq 3$.
In other words, $NM^<_2$ can be considered good, but the ordered graphs $NM^<_m$ with $m \geq 4$ cannot (although neither of these ordered graphs is connected and the definition of good ordered graphs is stated only for connected ordered graphs).
We do not know where the truth lies for $NM^<_3$, although we have computationally verified that $r_<(NM^<_3,K^<_n) = 5n-4$ for $n \leq 6$. 
These results are closely connected to proper colorings of ordered graphs that do not contain nested matchings as ordered subgraphs; see Subsection~\ref{subsec-nm2}, for an example.
In particular, a related problem is to decide whether the maximum chromatic number $\chi_2$ of $2$-queue graphs is at most $5$.

\begin{problem}
\label{nm3}
Is it true that $r_<(NM^<_3,K^<_n) = 5n-4$ for every positive integer $n$?
\end{problem}

Despite our efforts, we still lack complete characterization of all ordered Ramsey good graphs.
We exhaustively searched for good graphs with small number of vertices and we verified that there are no $4$-good graphs with at most 6 vertices other than monotone caterpillar graphs.
Based on our experimental results, we believe that all good ordered graphs are monotone caterpillar graphs and we pose the following conjecture.

\begin{conjecture}
\label{conj:caterpillar}
A connected ordered graph $G^<$ is good if and only if it is a monotone caterpillar graph.
\end{conjecture}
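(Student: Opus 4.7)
The ``if'' direction is already implicit in the paper: iterating Theorem~\ref{thm:expansion} along the join decomposition of a monotone caterpillar shows $n$-goodness for every $n\in\mathbb{N}$. The plan for the harder ``only if'' direction is to combine the structural characterization from Corollary~\ref{cor:caterpillar_char} with a non-goodness propagation argument. By Proposition~\ref{thm:with_cycles_not_good}, every good connected ordered graph is an ordered tree, and by Corollary~\ref{cor:caterpillar_char}, every ordered tree that is not a monotone caterpillar contains one of the nine forbidden connected ordered subgraphs of Figure~\ref{fig:forbidden_caterpillar2}. The paper already asserts that each of these nine subgraphs is not good (via computer-assisted search), so what remains is to show that every ordered tree properly containing one of the nine subgraphs is not good either.

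The key ingredient I would try to prove is a \emph{non-goodness propagation lemma}: if a connected ordered tree $F^<$ is not $n$-good, and $G^<$ is obtained from $F^<$ by inserting one new vertex at some position of the order and joining it by a single edge to an existing vertex, then $G^<$ is not $n'$-good for some $n'\geq n$. Combined with the fact (noted in the paper) that $(n+1)$-goodness implies $n$-goodness, and hence that non-goodness propagates upward in $n$, one can iterate this lemma along a leaf-peeling order converting any ordered tree that contains a forbidden subgraph into a sequence of single-leaf extensions. To prove the lemma, one starts from a witness coloring $c$ of $K^<_N$ with $N=(|V(F^<)|-1)(n-1)+1$ that avoids red $F^<$ and blue $K^<_n$, inserts a block $B$ of $n-1$ new vertices at the position of the new vertex of $G^<$, colors all edges inside $B$ red, and assigns the colors of edges between $B$ and the old vertices so as to simultaneously avoid a blue $K^<_n$ (the naive all-blue choice fails whenever $c$ contains a blue $K^<_{n-1}$, so some new-to-old edges must be red) and a red $G^<$ (those red new-to-old edges must not promote a red $F^<$ inside $c$ to a red $G^<$ by supplying the image of the new leaf). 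In the two easy regimes, where the new vertex is leftmost or rightmost in $G^<$, the all-blue extension combined with connectedness of $G^<$ handles both conditions at once.

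The principal obstacle is the middle-insertion case, where the new vertex lies strictly between vertices of $F^<$ in the order of $G^<$. Here the coloring between $B$ and the old vertices must be tailored to the local structure of $G^<$ near the inserted vertex and to the \emph{largely uncontrolled} structure of the arbitrary witness coloring $c$. I would attempt this by partitioning the old vertices according to their possible roles as leaf-neighbors of a would-be red copy of $G^<$ and then assigning red/blue to edges from $B$ via a pigeonhole or averaging argument over such partitions, ensuring every blue $(n-1)$-clique of $c$ sees a red edge into $B$ while no red copy of $F^<$ in $c$ acquires a red extension to $G^<$ through $B$. A fallback, if the general lemma proves too stubborn, is to treat each of the nine forbidden subgraphs case by case: for each one, describe a family of avoidance colorings parametrized by the way $G^<$ extends it, by modifying the Chv\'atal--Harary construction on $(|V(G^<)|-1)(n-1)+1$ vertices to absorb the single extra vertex witnessed by the forbidden substructure. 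Either route reduces to the same analytic step, namely converting a local four-vertex obstruction into a uniform $+1$ gain in the lower bound for all ordered trees containing it, and this is where the heart of the difficulty lies.
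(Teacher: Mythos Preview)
The statement you are attempting is stated in the paper as a \emph{conjecture}, not a theorem; the paper does not prove it. Your ``if'' direction is correct and matches exactly what the paper already records as the Corollary following Theorem~\ref{thm:expansion}. For the ``only if'' direction you have correctly identified the architecture that would be needed, but the central step---your \emph{non-goodness propagation lemma}---is precisely the open question the paper itself isolates immediately after stating the conjecture: ``We do not know whether each connected ordered subgraph of an $n$-good ordered graph is $n$-good. If this was true, then Corollary~\ref{cor:caterpillar_char} together with our computer-assisted results would imply that Conjecture~\ref{conj:caterpillar} is true as well.'' Your lemma is the contrapositive of this (possibly weakened by allowing $n'\ge n$), so you have rediscovered the obstruction rather than removed it.

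Your honest flagging of the middle-insertion case as the principal obstacle is accurate, but the two suggested attacks are not yet arguments. For the block-insertion approach, the witness coloring $c$ avoiding red $F^<$ and blue $K^<_n$ is completely uncontrolled: it may contain many blue copies of $K^<_{n-1}$, each of which forces at least one red edge from the inserted block $B$ into it, while simultaneously $c$ may contain many ``near-copies'' of $G^<$ missing only the new leaf, each of which forbids a specific red edge from $B$. There is no evident counting or pigeonhole reason these two constraint families cannot cover all choices, and nothing in your outline explains why they should not. The fallback of handling the nine forbidden subgraphs case by case faces the same difficulty: the extension from the four-vertex obstruction to an arbitrary ordered tree containing it is exactly the propagation step again, just pushed into each case. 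In short, your proposal is a correct reduction to the lemma, but the lemma itself remains open and your sketch does not close it.
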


We do not know whether each connected ordered subgraph of an $n$-good ordered graph is $n$-good.
If this was true, then Corollary~\ref{cor:caterpillar_char} together with our computer-assisted results would imply that Conjecture~\ref{conj:caterpillar} is true as well.

\section{Nested matchings versus complete graphs}

This section contains proofs of all statements from Subsection~\ref{subsec-nested}, that is, all results about off-diagonal ordered Ramsey numbers of nested matchings versus complete graphs.
In particular, we prove Theorems~\ref{prop:nm5.3}, \ref{thm:nm_counterexamples}, \ref{thm:linearni_matchingy}, and \ref{prop:nm2}.

\subsection{Proof of Theorem~\ref{prop:nm5.3}}
\label{subsec-prop:nm5.3}

To prove Theorem~\ref{prop:nm5.3}, that is, to show that $r_<(NM^<_n,K^<_3)\leq \left(3+\sqrt 5\right)n +1< 5.3n+1$ for every $n \in \mathbb{N}$, we first state a lemma about the number of edges in an ordered graph that does not contain $NM^<_n$ as an ordered subgraph.

We will assume that an ordered graph $G^<$ on $n$ vertices is represented by a matrix. 
The \emph{matrix representation} $A$ of $G^<$ is an $n \times n$ $\{0,1\}$-matrix such that the entry of $A$ on position $(i,j)$ is 1 if and only if $i<j$ and $\{i,j\}$ is an edge of $G^<$.
Sometimes we will not distinguish between $A$ and $G^<$ and we identify the edges of $G^<$ with positions of 1-entries in $A$.
Also, given a red-blue coloring $\chi$ of the edges of $K^<_n$, we define the \emph{matrix representation of $\chi$} to be the matrix representation of the ordered graph formed by red edges in $\chi$.
In particular, red edges correspond to 1-entries and blue edges correspond to 0-entries in such matrix.

We state the following lemma about the maximum number of edges in an ordered graph without a copy of $NM^<_n$.
This result was proved by Dujmović and Wood~\cite{dujWoo10} in their study of queue graphs. Nevertheless, we include its proof for completeness and present it using the matrix representations, which are more suitable for representing the red-blue colorings of complete ordered graphs.

\begin{lemma}[\cite{dujWoo10}]
\label{lemma:disjoint_paths}
For every $n\in\mathbb{N}$, if an ordered graph $G^<$ on $N$ vertices with $N \geq 2n$ does not contain $NM^<_n$ as an ordered subgraph, then the number of edges in $G^<$ is at most $(n-1)(2N-2n+1)$. 
Moreover, this upper bound is tight.
\end{lemma}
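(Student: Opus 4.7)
My plan is to partition the edges of $G^<$ into $n-1$ non-nested layers, each confined to a progressively narrower window of vertices, and then invoke the classical bound that a non-nested family of arcs on $m$ vertices has at most $2m-3$ edges (equivalently, the edge bound for 1-queue layouts). A matching construction will handle tightness.

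For tightness, I would take the ordered graph on $[N]$ whose edges are precisely the pairs $\{i,j\}$ with $i\le n-1$ or $j\ge N-n+2$. A routine inclusion--exclusion count yields exactly $(n-1)(2N-2n+1)$ edges, and $NM^<_n$ cannot fit because the innermost edge of any nested matching of size $n$ would have to lie between the $n$-th vertex and the $(N-n+1)$-st vertex, and by construction no such edge is present.

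For the upper bound, I would assign to every edge $e\in E(G^<)$ its \emph{outer depth} $d(e)$, defined as the length of the longest chain $e=e_1\subsetneq e_2\subsetneq\cdots\subsetneq e_k$ of edges of $G^<$ nested one inside the next (where $\subsetneq$ means ``strictly nested inside''). Since $G^<$ avoids $NM^<_n$ we have $d(e)\in\{1,\ldots,n-1\}$. Setting $Q_i=\{e : d(e)=i\}$ yields a partition of $E(G^<)$ with two properties that fall out of the definition: first, each $Q_i$ is a non-nested family, since otherwise prepending the inner edge to the witnessing chain of the outer edge would strictly increase the depth; and second, every edge $\{a,b\}\in Q_i$ satisfies $a\ge i$ and $b\le N-i+1$, because the chain of $i$ strictly nested edges around $e$ widens the enclosing interval by at least one unit on each side at every step. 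Thus $Q_i$ is supported on the $N-2i+2$ vertices of $\{i,\ldots,N-i+1\}$. Applying the $2m-3$ bound with $m=N-2i+2$ to each layer and telescoping gives
\[
|E(G^<)| \;\le\; \sum_{i=1}^{n-1}\bigl(2(N-2i+2)-3\bigr) \;=\; (n-1)(2N+1)-2n(n-1) \;=\; (n-1)(2N-2n+1).
\]

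The main obstacle will be choosing the correct notion of depth. Measuring depth \emph{inward} (using chains of nested edges strictly inside $e$) also produces a partition into non-nested layers, but the individual layers may span the whole vertex set and so only yield the weaker estimate $(n-1)(2N-3)$, which is off by a linear term. Measuring depth \emph{outward} is exactly what forces each $Q_i$ into a shrinking window and recovers the tight constant. The auxiliary fact that a non-nested family of arcs on $m\ge 2$ vertices has at most $2m-3$ edges (the $k=1$ case of the queue-graph bound) is classical and admits a short inductive proof, so once the outward-depth decomposition is in place the calculation is immediate.
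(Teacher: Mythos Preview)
Your argument is correct, and it is a genuinely different proof from the one in the paper. The paper does not decompose $E(G^<)$ into layers at all; instead it works directly in the matrix representation and observes that each anti-diagonal $\{(i,j):i+j=k,\ i<j\}$ can carry at most $n-1$ edges (since $n$ entries on a single anti-diagonal form a copy of $NM^<_n$). Summing $\min\{n-1,\lfloor (k-1)/2\rfloor,\lfloor (2N-k+1)/2\rfloor\}$ over $k=3,\dots,2N-1$ gives $(n-1)(2N-2n+1)$ in one pass, with no appeal to the $n=2$ case as a black box. For tightness the paper uses the graph of all edges of length at most $2n-2$, whereas your extremal example (all edges meeting the first or last $n-1$ vertices) is different but equally valid.

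What each approach buys: the anti-diagonal count is shorter and completely self-contained, needing no auxiliary lemma. Your outer-depth decomposition is structurally richer---it exhibits any $NM^<_n$-free ordered graph as a union of $n-1$ non-nested ($1$-queue) layers supported on the shrinking windows $\{i,\dots,N-i+1\}$, which dovetails nicely with the route-covering viewpoint the paper develops in Lemma~\ref{lem-routes}. Your remark that measuring depth \emph{inward} loses the shrinking-window property and only recovers $(n-1)(2N-3)$ is exactly right and is a good sanity check on why the outward orientation is the correct one.
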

\begin{proof}
Let $G^<$ be an ordered graph with the vertex set $[N]$ such that $G^<$ does not contain $NM^<_n$ as an ordered subgraph.
Let $A$ be a matrix representation of $G^<$.
For $k \in \{3,\dots,2N-1\}$, we define the $k$th \emph{anti-diagonal} of $A$ to be the set of positions $(i,j)$ of $A$ such that $i+j=k$, and $i<j$; see part~(a) of Figure~\ref{fig:anti-diagonals}. 
Note that there are exactly $2N-3$ anti-diagonals and each one of them contains at most $(n-1)$ 1-entries, as $n$ 1-entries in an anti-diagonal give a copy of the nested matching $NM^<_n$ in $G^<$.

\begin{figure}[ht]
\centering
\includegraphics{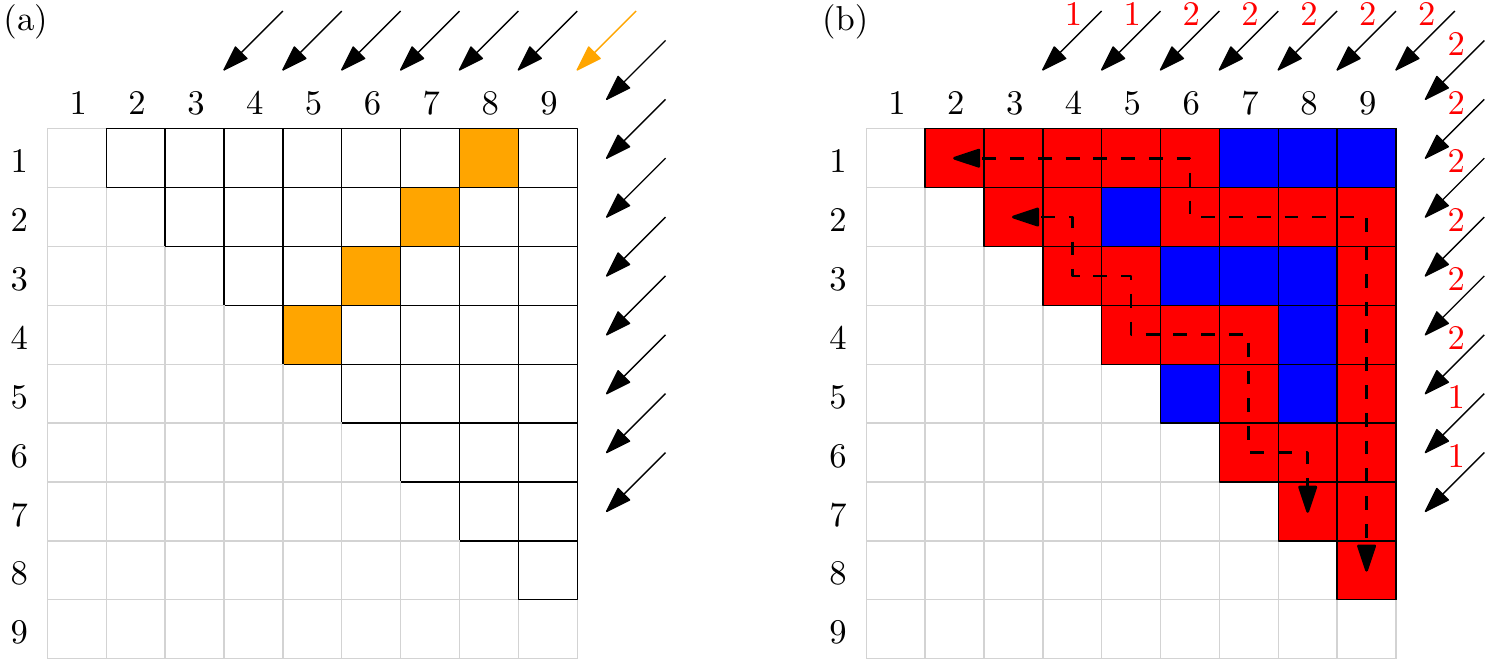}
\caption{(a) An example of an anti-diagonal in the matrix $A$ for $N=9$. (b) A construction of two disjoint routes in $A$ for $N=9$ and $n=3$, where red entries are 1-entries and blue entries are 0-entries. Each anti-diagonal achieves the maximum possible number of 1-entries, which leads to an ordered graph that does not contain a copy of $NM_3^<$ and has the maximum number of edges. The dashed paths denote the routes without their endpoints on the diagonal.}
\label{fig:anti-diagonals}
\end{figure}

It follows that there can be at most $(n-1)(2N-3)$ edges in $G^<$. To obtain a stronger estimate, we take into account the fact that, for each $k<2n-1$, the $k$th anti-diagonal contains only at most $\lfloor \frac{k-1}{2}\rfloor$ 1-entries. A similar estimate holds for the number of $1$-entries of a $k$th anti-diagonal with $k > 2N-2n+3$. Altogether, by summing the estimates for all anti-diagonals, we see that the number of edges of $G^<$ is at most
\begin{align*}
&\sum_{k=3}^{2n-2} \left \lfloor \frac{k-1}{2} \right \rfloor + \sum_{k=2n-1}^{2N-2n+3} (n-1) + \sum_{k=2N-2n+4}^{2N-1} \left \lfloor \frac{2N-k+1}{2} \right \rfloor \\
&= 2\sum_{k=1}^{n-2} k + (2N-4n+5)(n-1) + 2\sum_{k=1}^{n-2} k \\
&=(n-2)(n-1) + (2N-4n+5)(n-1) + (n-2)(n-1) \\
&=(n-1)(2N-2n+1),
\end{align*}
where we used the assumption $N\geq 2n$.

This upper bound is tight, which can be seen by considering the ordered graph $H^<$ on $N$ vertices with all edges of length at most $2n-2$.
Here, the \emph{length} of an edge $\{i,j\} \in \binom{N}{2}$ is defined as $|i-j|$.
Summing over the lengths $k$ of the edges of~$H^<$, we see that $H^<$ has exactly
$$\sum_{k=1}^{2n-2} (N-k) = (n-1)(2N-2n+1)$$
edges. The ordered graph $H^<$ then does not contain $NM^<_n$ as an ordered subgraph, as the longest edge in each copy of $NM^<_n$ in $H^<$ has to have length at least $2n-1$.
\end{proof}

A more general construction to achieve this maximum number of edges in an ordered graph without a copy of $NM^<_n$ is to lead $n-1$ pairwise disjoint ``routes'' in the matrix $A$ and set all entries of $A$ with positions in these routes to $1$ and all other entries of $A$ to $0$; see part~(b) of Figure~\ref{fig:anti-diagonals}.
For $k \in [n-1]$, the $k$th \emph{route} in the matrix $A$ is the set of positions $\{(i_\ell,j_\ell) \colon \ell \in [2N-4k+3]\}$ such that we have $(i_1,j_1) = (k,k)$, $(i_{2N-4k+3},j_{2N-4k+3}) = (N-k+1,N-k+1)$, and $(i_{\ell+1} = i_\ell+1 \; \& \; j_{\ell+1} = j_\ell)$ or $(i_{\ell+1} = i_\ell \; \& \; j_{\ell+1} = j_\ell+1)$ for every $\ell \in [2N-4k+2]$.
The third condition says that a route ``goes'' only to the right or down in $A$, while the first two conditions only specify the start and the end of each route.
Note that a route can contain entries of $A$ that lie on or below the main diagonal of $A$.
We say that an ordered graph $G^<$ is \emph{covered} by a set $R$ of routes if every position in the matrix representation of $G^<$ that corresponds to an edge of $G^<$ is contained in some route from $R$.

We note that a route in a matrix $A$ corresponds to a \emph{queue} in a \emph{linear queue layout} that is represented by $A$, as defined in~\cite{dujWoo10}.
Here, we stick to routes in matrix representations, as we find it more convenient to visualize red-blue coloring of complete ordered graphs with matrices.

We prove that there is no copy of $NM^<_n$ in the ordered graph that is covered by $n-1$ pairwise disjoint routes.
This result will be used several times later and it can be shown that if all such routes lie above the main diagonal of $A$, then we have an ordered graph without $NM^<_n$ containing the maximum number of edges.

\begin{lemma}
\label{lem-routes}
For every $n \in \mathbb{N}$, every ordered graph $G^<$ that is covered by $n-1$ pairwise disjoint routes does not contain $NM^<_n$ as an ordered subgraph.
\end{lemma}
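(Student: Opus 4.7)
The plan is to prove this by contradiction: suppose $G^<$ contains an ordered copy of $NM^<_n$ on vertices $v_1 < v_2 < \cdots < v_{2n}$, so its $n$ edges are $e_i = \{v_i, v_{2n-i+1}\}$ for $i \in [n]$, and I will derive a pigeonhole contradiction against the $n-1$ routes.

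First, I would record the crucial geometric fact about the positions of these edges in the matrix representation $A$ of $G^<$: edge $e_i$ occupies position $(v_i, v_{2n-i+1})$, and for indices $i < j$ we have $v_i < v_j$ while $v_{2n-i+1} > v_{2n-j+1}$. Thus, as $i$ grows, the row coordinate of $e_i$ strictly increases and the column coordinate strictly decreases. In other words, the $n$ positions of the nested matching form a strictly ``anti-monotone'' staircase in $A$.

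Next I would use the defining property of a route: consecutive positions of a route differ by either $(+1,0)$ or $(0,+1)$, so along any route both coordinates are non-decreasing. Consequently, if a single route contained two positions $(v_i, v_{2n-i+1})$ and $(v_j, v_{2n-j+1})$ with $i < j$, the row increase $v_i < v_j$ would force the column coordinates to satisfy $v_{2n-i+1} \leq v_{2n-j+1}$, contradicting the strict decrease established above. Hence each of the $n-1$ routes can cover at most one of the $n$ edges of the alleged copy of $NM^<_n$.

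Finally, since by hypothesis $G^<$ is covered by the $n-1$ routes, every edge $e_i$ lies on some route, and the pigeonhole principle then yields two distinct edges $e_i, e_j$ lying on the same route, contradicting the previous step. I do not expect any serious obstacle here; the only point requiring a small amount of care is checking that consecutive positions on a route are indeed weakly monotone in both coordinates (immediate from the definition) and that the inequalities $v_i < v_j$ and $v_{2n-i+1} > v_{2n-j+1}$ are both strict so that the ``at most one edge per route'' conclusion is really strict.
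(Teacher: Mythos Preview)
Your argument is correct and mirrors the paper's proof almost exactly: both proceed by contradiction, observe that the positions of the $n$ nested edges form an anti-monotone staircase while any single route is weakly monotone in both coordinates, and then invoke the pigeonhole principle on $n$ edges versus $n-1$ routes. The only difference is cosmetic---you write out the vertices $v_1<\cdots<v_{2n}$ explicitly, whereas the paper phrases the key incompatibility in terms of a generic copy of $NM^<_2$.
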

\begin{proof}
Suppose for contradiction that $G$ contains a copy of $NM^<_n$. 
Every edge of this copy then belongs to one of the pairwise disjoint routes. No two distinct edges of a nested matching can belong to the same route, since a copy of $NM^<_2$ corresponds to two entries $(i,j)$ and $(k,l)$ of the matrix $A$ such that $i<k<l<j$, whereas any two entries $(i,j)$ and $(k,l)$ of a route satisfy $i\leq k$ and $j\leq l$ (or $k\leq i$ and $l\leq j$). However, by the pigeonhole principle, there is at least one such route with at least two edges of $NM^<_n$, a contradiction.
\end{proof}

By Lemma~\ref{lem-routes},
any ordered graph $H^<$ on $N$ vertices whose edges can be partitioned into $n-1$ pairwise disjoint routes does not contain $NM^<_n$.
Moreover, if the routes lie above the main diagonal of its matrix representation, then $H^<$ has $\sum_{k=1}^{n-1}(2N-4k+1) =(n-1)(2N-2n+1)$ edges, which is tight by Lemma~\ref{lemma:disjoint_paths}.

We are now ready to improve the upper bound from Proposition~\ref{nm}. Let us assume we have a red-blue coloring $\chi$ of the edges of $K^<_N$ with no red copy of $NM_n^<$ and no blue copy of $K^<_3$. Since there is no blue copy of $K^<_3$, there cannot be any vertex $v$ of $K^<_N$ contained in at least $2n$ blue edges, as otherwise there are only red edges between any two such neighbors of~$v$, which gives a red copy of $K^<_{2n}$ in $\chi$ and thus also a red copy of $NM^<_n$ in $\chi$.

If every vertex of $K^<_N$ is contained in less than $2n$ blue edges, then there is less than $2nN/2 = nN$ blue edges in $\chi$. Therefore there are more than $\binom{N}{2}-nN$ red edges. However, by Lemma~\ref{lemma:disjoint_paths}, there can be at most $(n-1)(2N-2n+1)$ red edges in $\chi$, as otherwise we have a red copy of $NM^<_n$ in $\chi$. Therefore we obtain
\[\frac{N(N-1)}{2}-nN \leq (n-1)(2N-2n+1),\]
which can be rewritten as
\[N^2+3N(1-2n)+(4n^2-6n+2) \leq 0.\]

By solving this quadratic inequality for $N\in\mathbb{N}$ we arrive at the bound \[N \leq \frac{1}{2}\left(\sqrt{20n^2-12n+1} + 6n-3\right).\]

For $n \geq \frac{1}{2}$, the right side of the above expression is at most $\left(3+\sqrt 5\right)n$, which concludes the proof of Theorem~\ref{prop:nm5.3}.

\subsection{Proof of Theorem~\ref{thm:nm_counterexamples}}

We show that, for every $n\geq6$, we have $r_<(NM^<_n,K^<_3) \geq 4n+1$.
Moreover, we prove $r_<(NM^<_4,K^<_3) = 16$ and $r_<(NM^<_5,K^<_3) = 20$. 
First, we prove the lower bound $r_<(NM^<_n,K^<_3) \geq 4n+1$ by showing that there exists a red-blue coloring $\chi$ of the edges of the complete ordered graph on $4n$ vertices without a blue triangle and a red copy of $NM^<_n$.

Let $n \geq 6$ be an integer.
The matrix representation $A$ of the coloring $\chi$ is illustrated in Figures~\ref{fig:obecna_konstrukce_24} and~\ref{fig:obecna_konstrukce_28}.
We now describe the construction of $\chi$ formally by listing all its blue edges.
We note that this coloring is \emph{symmetric}, that is, for all $i,j\in\mathbb{N}$ with $1\leq i<j\leq N$, the edges $\{i,j\}$ and $\{N-j+1, N-i+1\}$ have the same color.

\begin{figure}[ht]
\centering
\includegraphics[scale=0.65]{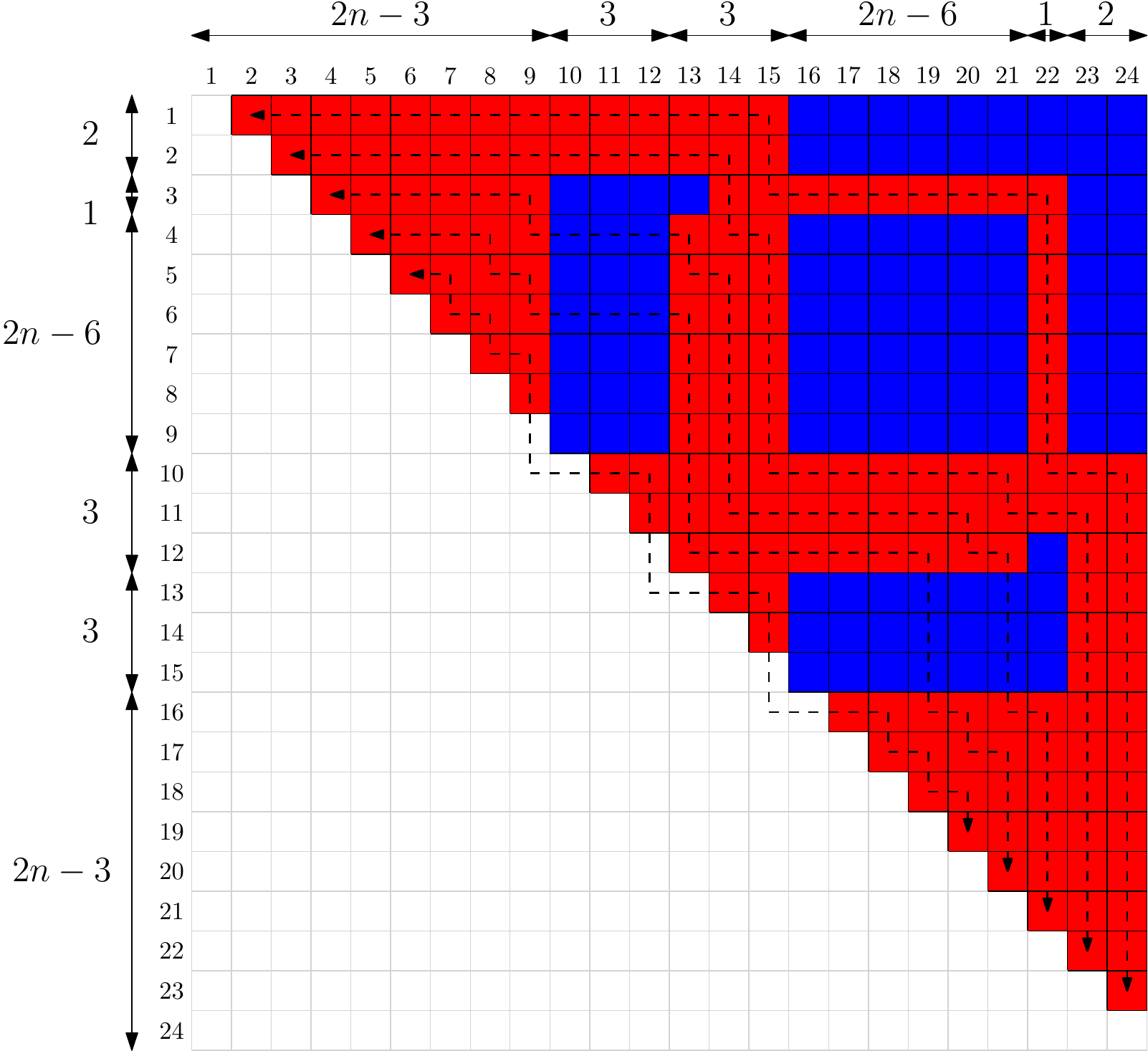}
\caption{The matrix representation of the coloring $\chi$ of the edges of $K^<_{4n}$ for $n=6$.}
\label{fig:obecna_konstrukce_24}
\end{figure}

\begin{figure}[ht]
\centering
\includegraphics[scale=0.65]{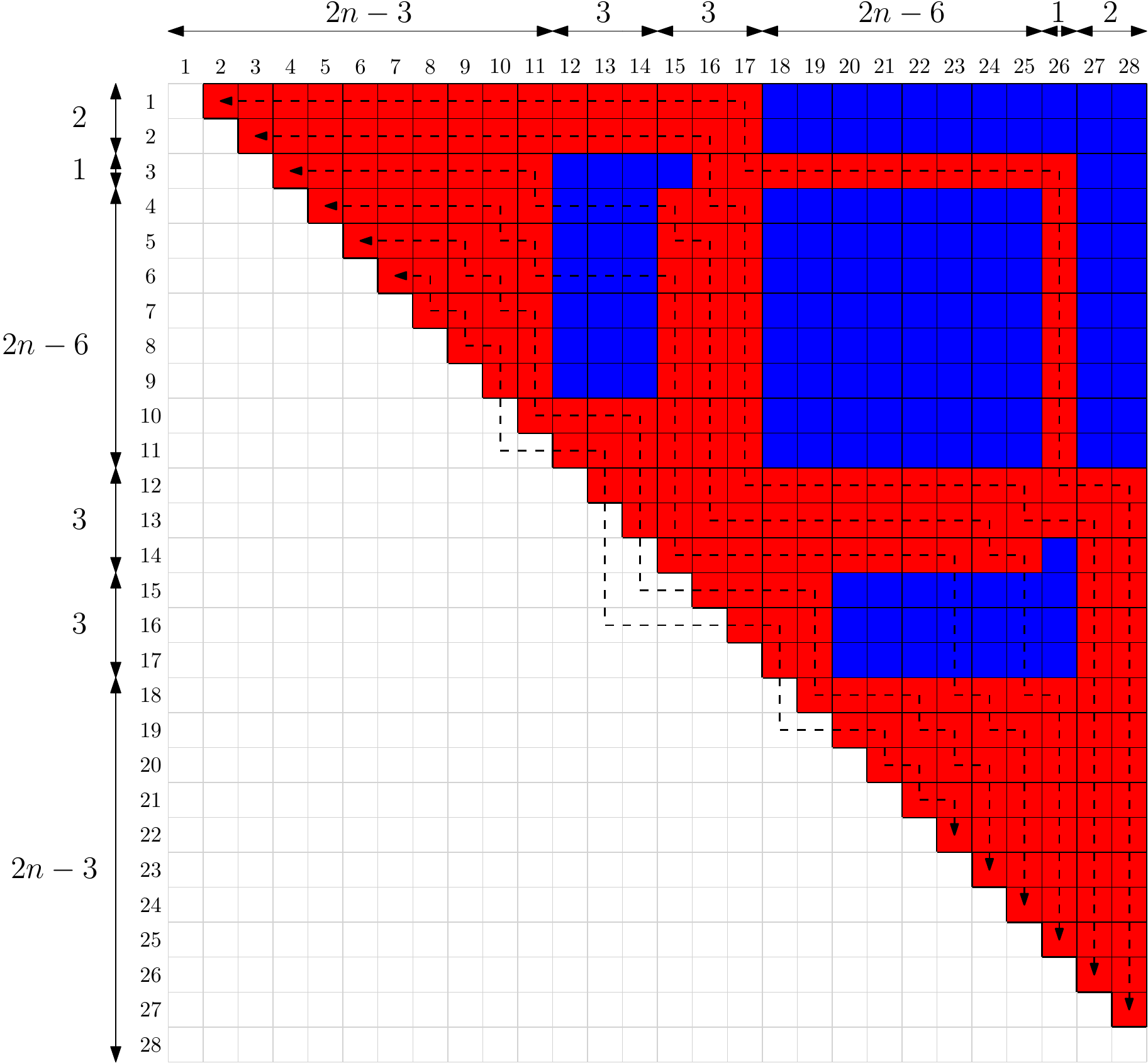}
\caption{The matrix representation of the coloring $\chi$ of the edges of $K^<_{4n}$ for $n=7$.}
\label{fig:obecna_konstrukce_28}
\end{figure}

The blue edges in $\chi$ are decomposed into the following sets: the set $S=\{\{i,j\} \colon 4 \leq i \leq 2n-3, 2n+4\leq j \leq 4n-3 \}$, which forms a $(2n-6) \times (2n-6)$ square in $A$, the set $L=\{\{i,j\} \colon i\in\{1,2\}, 2n+4\leq j \leq 4n \} \cup \{\{i,j\} \colon 1 \leq i \leq 2n-3, j\in\{4n,4n-1\}\}$, which corresponds to the L-shaped upper right corner of $A$ of width and height $2n-3$, and two sets $R_1=\{\{i,j\} \colon 3 \leq i \leq 9, 2n-2\leq j \leq 2n \}$ and $R_2=\{\{i,j\} \colon 2n+1 \leq i \leq 2n+3, 4n-8\leq j \leq 4n-2 \}$, which form two $3 \times 7$ rectangles in~$A$.
Finally, there are two single blue edges $e_1=\{3,2n+1\}$ and $e_2=\{2n,4n-2\}$.
All the remaining edges of $K^<_{4n}$ are red in $\chi$.

We show that the red edges of $\chi$ can be covered by $n-1$ pairwise disjoint routes.
Then it will follow from Lemma~\ref{lem-routes} that there is no red copy of the nested matching $NM^<_n$ in~$\chi$.
The set of routes covering the ordered graph formed by red edges in $\chi$ is constructed inductively with respect to $n$.
As the basis of the induction, we use the set of routes for $n=6$ that is illustrated in Figure~\ref{fig:obecna_konstrukce_24}.
For $n \geq 7$, we use essentially the same $n-2$ routes we had for $n-1$, we only elongate them. 
However, we additionally have to cover two new diagonals formed by entries on positions $(i,j)$ with $j-i \in \{1,2\}$ and $n-1 \leq i \leq 3n+1$; see Figure~\ref{fig:obecna_konstrukce_28}.
Covering these two new diagonals by an $(n-1)$st route is clearly possible and thus we can cover the whole ordered graph by $n-1$ pairwise disjoint routes.
Note that some entries of the two new diagonals might be covered by the first $n-2$ routes, but this makes covering their entries by the $(n-1)$st route only simpler.

To prove that $\chi$ does not contain a blue triangle for any $n\geq 6$, we consider the ordered graph formed by edges that are blue in $\chi$.
First, there is no blue triangle containing the edge $e_1=\{3,2n+1\}$, as in any such blue triangle there is another blue edge incident to vertex 3.
However, all other edges containing vertex 3 are of the form $\{3,i\}$ for $i \in \{2n-2,2n-1,2n\}\cup \{4n-1,4n\}$ and there is no blue edge of the form $\{2n+1,i\}$ for these $i$.
By symmetry, there is no blue triangle containing the edge $e_2=\{2n,4n-2\}$.

The edges from $S\cup L$ form a bipartite graph and thus there is no blue triangle with vertices in $S\cup L$ and any blue triangle in $\chi$ has to have an edge in $R_1 \cup R_2$.
Since both sets $R_1$ and $R_2$ induce a bipartite graph, any blue triangle in $\chi$ contains at most one edge in $R_1$ and at most one edge in $R_2$.

Consider a blue triangle $T$ with an edge from $R_1$.
By the definition of $R_1$, this edge contains a vertex $i \in \{2n-2,2n-1,2n\}$.
Since there is at most one edge of $T$ in $R_1$, there is an edge $\{i,j\}$ of $T$ that is not contained in $R_1$.
The vertex $j$ satisfies $j > i$, as all blue edges $\{i,k\}$ with $k \leq i$ lie in $R_1$.
However, the only blue edge of this form is for $i=2n$ and $j=4n-2$, which gives the edge $e_2$ and we already know that $e_2$ is not contained in a blue triangle.
Thus there is no blue triangle with an edge in $R_1$.
By symmetry, there is also no blue triangle with an edge from $R_2$ and, altogether, $\chi$ contains no blue triangle.

It is likely that our construction can be modified to obtain stronger lower bounds on $r_<(NM^<_n,K^<_3)$.
However, the coloring $\chi$ is easy to describe for any $n \geq 6$ and one can show that it does not contain the forbidden monochromatic ordered subgraphs without employing too complicated case analysis.
We also note that some of the blue edges might be colored red without introducing a red copy of $NM^<_n$ in the resulting coloring.

Now, we prove the rest of the statement of Theorem~\ref{thm:nm_counterexamples}, that is, we show $r_<(NM^<_4,K^<_3) = 16$ and $r_<(NM^<_5,K^<_3) = 20$.
The lower bounds $r_<(NM^<_4,K^<_3) \geq 16$ and $r_<(NM^<_5,K^<_3) \geq 20$ are obtained using red-blue colorings $\chi_1$ and $\chi_2$ whose matrix representations can be seen in parts~(a) and~(b) of Figure~\ref{fig:counterexample1}, respectively.
We prove that the colorings $\chi_1$ and $\chi_2$ do not contain a red copy of $NM^<_4$ and $NM^<_5$, respectively, and that there is also no blue triangle.

\begin{figure}[ht]
\centering
\includegraphics[scale=0.65]{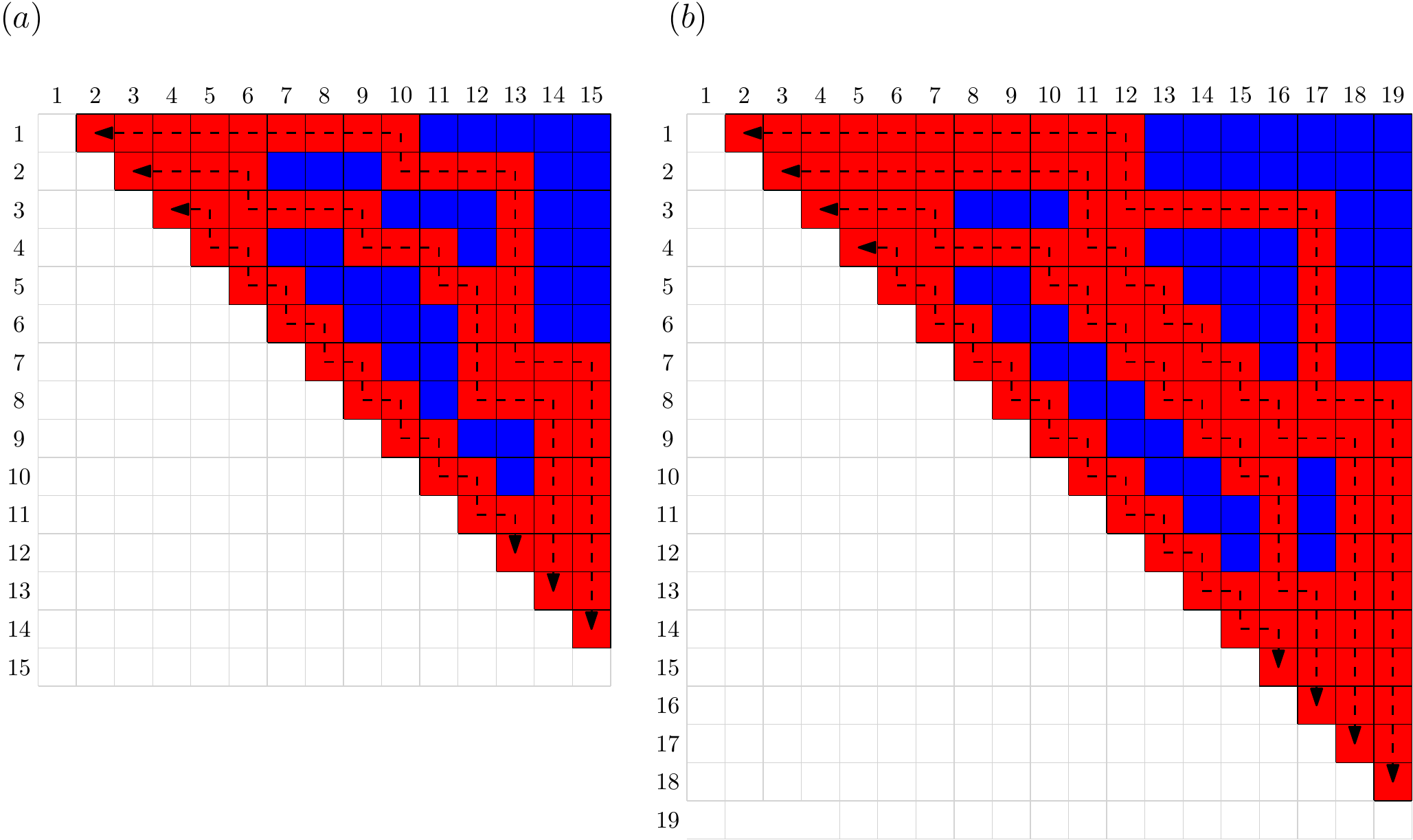}
\caption{(a) The coloring $\chi_1$ of the edges of $K^<_{15}$ vertices without a red $NM^<_4$ and a blue $K^<_3$.
	(b) The symmetric coloring $\chi_2$ of the edges of $K^<_{19}$ without a red $NM^<_5$ and a blue $K^<_3$.}
	\label{fig:counterexample1}
\end{figure}

The red edges of $\chi_1$ and $\chi_2$ can be partitioned into three and four disjoint routes, respectively, in the matrix representation (see Subsection~\ref{subsec-prop:nm5.3} for the definition).
It follows from Lemma~\ref{lem-routes} that $\chi_1$ and $\chi_2$ do not contain a red copy $NM^<_4$ and $NM^<_5$, respectively, as an ordered subgraph.
Note that all the routes lie above the main diagonal and thus both examples contain the maximum possible number of red edges by Lemma~\ref{lemma:disjoint_paths}.

Proving that $\chi_1$ does not contain a blue triangle is a bit tedious and we omit it here, as we verified it using a computer.
We can, however, use the fact that $\chi_2$ is symmetric to give a short explicit proof of the fact that $\chi_2$ contains no blue copy of $K^<_3$.

Let $S=\{1,\dots,10\}$ and suppose for contradiction there exists a blue copy of $K^<_3$ in $\chi_2$.
Because of symmetry reasons, we may assume there is a copy of $K^<_3$ formed by vertices $v_1<v_2<v_3$ such that $v_1,v_2 \in S$.
Vertices $1,2,4$ do not have any blue neighbors in $S$. The only blue neighbors of vertex 3 in $S$ are $8,9,10$ and the only blue neighbors of vertex 3 outside of $S$ are $18$ and $19$.
However, there is no blue edge between vertices from $\{8,9,10,18,19\}$, hence vertex 3 also cannot be a part of any blue triangle.
Thus $v_1, v_2 \in \{5,6,\dots,10\}$.

Blue neighbors of vertex 8 are $3,5,11,12$ and these vertices form a red clique.
Similarly, blue neighbors of vertex 9 are $3,5,6,12,13$ and also form a red clique.
Blue neighbors of 10 are two sets of vertices $3,6,7$ and their reflections $13,14,17$.
These six vertices also form a red clique.
The only option left is $v_1,v_2\in\{5,6,7\}$, but there are no blue edges between these vertices, a contradiction.
The coloring $\chi_2$ thus contains no blue triangle.

The proof of the upper bounds $r_<(NM^<_4,K^<_3) \leq 16$ and $r_<(NM^<_5,K^<_3) \leq 20$ is computer assisted.
We performed an exhaustive computer search using our SAT-solver-based program~\cite{utility} and verified that every red-blue coloring of the edges of the ordered complete graph on $16$ vertices contains either a red copy of $NM_4^<$ or a blue triangle.
Similarly, every red-blue coloring of the edges of $K^<_{20}$ contains either a red copy of $NM_5^<$ or a blue triangle.

\subsection{Proofs of Theorem~\ref{prop:nm2} and Corollary~\ref{cor-chromatic}}
\label{subsec-nm2}

First, we prove Theorem~\ref{prop:nm2} by deriving the exact formula $r_<(NM_2^<,K^<_n) = 3n-2$ for the ordered Ramsey number of the nested matching $NM^<_2$ versus $K^<_n$.
The lower bound $r_<(NM_2^<,K^<_n) \geq 3n-2$ follows from a simple red-blue coloring of the edges of $K^<_{3n-3}$, where we partition the vertex set into $n-1$ consecutive cliques, each of size 3, and we color all edges between vertices from the same clique red and all other edges blue.
Then it is easy to see that there is no red copy of $NM^<_2$ and no blue copy of $K^<_n$.

To show the upper bound $r_<(NM_2^<,K^<_n) \leq 3n-2$, we first state an auxiliary result about ordered graphs that do not contain $NM^<_m$ as an ordered subgraph for some $m \in \mathbb{N}$.
This result was proved by Dujmović and Wood~\cite{dujWoo10}.

\begin{lemma}[\cite{dujWoo10}]
\label{lem-1queue}
Every 1-queue graph is 3-colorable.
\end{lemma}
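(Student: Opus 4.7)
My plan begins with a structural observation about neighborhoods in a $1$-queue graph $G^<$. For any vertex $v$ with left-neighbors $u_1 < u_2 < \cdots < u_k$, I claim that the induced subgraph on $\{u_1,\dots,u_k\}$ is a star centered at $u_1$ (together with possibly isolated leaves). Indeed, if $\{u_i,u_j\}$ were an edge for some $2 \le i < j$, then $u_{i-1}$ is also a left-neighbor of $v$, and the pair $\{u_{i-1},v\}$ and $\{u_i,u_j\}$ satisfies $u_{i-1} < u_i < u_j < v$, giving a copy of $NM^<_2$ in $G^<$ — contradiction. By symmetry, the right-neighbors of any vertex $v$ induce a star centered at the rightmost one. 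In particular, the neighborhood of every vertex in $G^<$ is always $2$-colorable.

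Given this structural lemma, I would attempt the proof by induction on the number of vertices. For the inductive step, I would remove a carefully chosen vertex $v$, apply the inductive hypothesis to obtain a proper $3$-coloring of $G^< - v$ (which is also a $1$-queue graph), and then extend to $v$ by giving it a color not used by its neighbors. Since the neighborhood of $v$ is $2$-colorable by the structural lemma, if the coloring of $G^<-v$ only uses two colors on $N(v)$, then one color remains free and the extension succeeds.

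The main obstacle is precisely that an arbitrary proper $3$-coloring of $G^<-v$ may nevertheless use all three colors on $N(v)$, even though $N(v)$ itself is $2$-colorable. To overcome this, my plan is to strengthen the inductive hypothesis so that the $3$-coloring produced respects additional structure. The natural route is to exploit the characterization of $1$-queue graphs as arched-leveled-planar graphs (Heath--Rosenberg), which yields a leveling $\ell : V(G^<) \to \mathbb{Z}_{\ge 0}$ with $|\ell(u) - \ell(v)| \le 1$ for every edge, and in which within-level edges form a non-crossing (outerplanar) structure. One can then color each vertex based on $\ell(v) \bmod 3$, modified on within-level edges by using the $2$-colorability of the within-level structure. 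Verifying that the right leveling exists and that the resulting assignment is proper — in particular, handling the interaction between the level-mod-$3$ assignment and the within-level $2$-coloring on every arch — is the technical heart of the proof.
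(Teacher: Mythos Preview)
The paper does not actually prove this lemma: it is quoted from Dujmovi\'c and Wood~\cite{dujWoo10} and used as a black box in the proof of Theorem~\ref{prop:nm2}. So there is no in-paper argument to compare your proposal against.

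Regarding the proposal itself, two remarks. First, your structural claim that $N(v)$ is $2$-colorable does not follow just from the star structure on $L(v)$ and on $R(v)$ separately; you must also control the edges running between $L(v)=\{u_1<\cdots<u_k\}$ and $R(v)=\{w_1<\cdots<w_m\}$. In fact, any edge $\{u_i,w_j\}$ with $i\le k-1$ would nest $\{u_k,v\}$, and any with $j\ge 2$ would nest $\{v,w_1\}$, so the only possible cross edge is $\{u_k,w_1\}$; with this, $G^<[N(v)]$ is a forest and hence bipartite. This is a minor gap, and in any case you correctly recognise that the naive induction built on it cannot be completed, because a $3$-coloring of $G^<-v$ need not restrict to a $2$-coloring on $N(v)$.

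Second, the route you ultimately propose --- invoke the Heath--Rosenberg characterisation of $1$-queue graphs as arched leveled-planar graphs, then colour essentially by level modulo $3$ with an adjustment on the within-level arches --- is the standard path to this result and is how the cited source obtains it. However, what you have written is explicitly a plan, not a proof: you defer the existence of the leveling and the verification that the proposed colouring is proper, and you yourself identify the interaction between the $\ell\bmod 3$ rule and the within-level $2$-colouring as ``the technical heart of the proof'' without carrying it out. That interaction is genuinely where the work lies (a naive ``colour level $i$ by $i\bmod 3$'' fails on arches, and independently $2$-colouring each level can clash across adjacent levels), so as it stands your write-up is a correct outline rather than a proof.
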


Now, let $\chi$ be a red-blue coloring of the edges of $K^<_{3n-2}$.
Assume that $\chi$ does not contain a red copy of $NM^<_2$.
The ordered graph $R^<$ formed by edges that are red in $\chi$ is a 1-queue graph, as $R^<$ does not contain a copy of $NM^<_2$.
By Lemma~\ref{lem-1queue}, the ordered graph $R^<$ is 3-colorable and therefore we can partition its vertex set into 3 disjoint sets $C_1,C_2,C_3$ such that no two vertices from the same set are connected by an edge in $R^<$.

By the pigeonhole principle, there is a set $C_i$ that contains at least $\left \lceil{\frac{3n-2}{3}}\right \rceil = n$ vertices.
Since there is no edge of $R^<$ between any two vertices from $C_i$, we see that $C_i$ induces a blue copy of $K^<_n$ in $\chi$, which gives the desired upper bound $r_<(NM_2^<,K^<_n) \leq 3n-2$ and finishes the proof of Theorem~\ref{prop:nm2}.

In the rest of the subsection, we prove Corollary~\ref{cor-chromatic} by showing that the maximum chromatic number $\chi_k$ of $k$-queue graphs is at least $2k+2$ for every $k \geq 3$.

\begin{proof}[Proof of Corollary~\ref{cor-chromatic}]
For $k \in \mathbb{N}$, let $N$ be a positive integer such that $r_<(NM^<_{k+1},K^<_3) > N$.
We will show that $\chi_k \geq \lceil N/2\rceil$.
Since $r_<(NM^<_{k+1},K^<_3) > N$, there is a red-blue coloring of the edges of $K^<_N$ without a red copy of $NM^<_{k+1}$ and a blue copy of $K^<_3$.
Let $R^<$ be the ordered subgraph of $K^<_N$ formed by red edges.
Suppose for contradiction that the chromatic number $\chi(R^<)$ of $R^<$ is less than $\lceil N/2\rceil$.
Then, by the pigeonhole principle, there is an independent set in $R^<$ of size $s \geq 3$.
However, since there is no blue copy of $K^<_3$, we have $s < 3$, a contradiction.

By Theorem~\ref{thm:nm_counterexamples}, we have $r_<(NM^<_{k+1}K^<_3) > 4(k+1)-1$ for every $k \geq 3$.
Applying this estimate to the previous observation, we obtain $\chi_k \geq \left\lceil \frac{4(k+1)-1}{2} \right\rceil = 2k+2$.
\end{proof}

\subsection{Proof of Theorem~\ref{thm:linearni_matchingy}}

Here, we prove Theorem~\ref{thm:linearni_matchingy} by showing $r_<(NM^<_m,K^<_{n+1}) = \Theta(mn)$ for every $m,n\in\mathbb{N}$.
We recall the famous \emph{Turán's theorem}~\cite{turan}, which states that, for any $n,N\in\mathbb{N}$, every graph $G$ on $N$ vertices that does not contain $K_{n+1}$ as a subgraph has at most
$\left(1-\frac{1}{n}\right)\frac{N^2}{2}$ edges.

Let $\chi$ be a red-blue coloring of the edges of $K^<_N$ , which does not contain a red copy of~$NM^<_m$ nor a blue copy of $K^<_{n+1}$.
We proceed along the lines of our proof for Proposition~\ref{prop:nm5.3}.
By Lemma~\ref{lemma:disjoint_paths}, there can be at most
$(m-1)(2N-2m+1)$ red edges in $\chi$. 
Since the subgraph formed by blue edges does not contain a copy of $K_{n+1}$, Turán's theorem implies that there can be at most 
$\left(1-\frac{1}{n}\right)\frac{N^2}{2}$
blue edges in $\chi$. Thus the following inequality 
\[(m-1)(2N-2m+1) + \left(1-\frac{1}{n}\right)\frac{N^2}{2} \geq \frac{N(N-1)}{2}\] is satisfied and it can be rewritten as
\[N^2 - Nn(4m-3) + n(4m^2-6m+2) \leq 0.\]

By solving this quadratic inequality, we get 
\[N \leq \frac{1}{2}\left(4mn-3n + \sqrt{(4m-3)^2n^2-16m^2n+24mn-8n}\right)\]
and thus $N = O(mn)$. 

The lower bound $N = \Omega(mn)$ can be obtained from a coloring on $(2m-1)(n-1)$ vertices formed by $n-1$ red cliques formed by consecutive vertices, each of size $2m-1$, such that any two vertices from different cliques form a blue edge. This coloring clearly contains no red copy of $NM^<_m$ and no blue copy of $K^<_n$ as ordered subgraphs.

\section{Ramsey goodness for ordered graphs}

This section is devoted to proofs of results of about Ramsey goodness of ordered graphs that are stated in Subsection~\ref{subsec:goodness}.
That is, we prove Theorem~\ref{thm:expansion} and Proposition~\ref{prop:forbidden_caterpillar}.

\subsection{Proof of Theorem~\ref{thm:expansion}}

Here, we present the proof of Theorem~\ref{thm:expansion} by showing that a join of a good ordered graph with a one-sided star is a good graph.
Formally, we prove that, for all $n, r, l\in\mathbb{N}$, if a connected ordered graph $G^<$ is $n$-good, then the ordered graphs $G^< + S^<_{1,r}$, $G^< + S^<_{l,1}$, $S^<_{l,1}+G^<$, and $S^<_{1,r}+G^<$ are also $n$-good.
We start with the following auxiliary result.

\begin{lemma}
\label{lemma:copies}
Let $G^<$ be a connected ordered graph such and set $N = r_<(G^<,K^<_n)$.
Then every red-blue coloring of the edges of $K^<_M$, where $M \geq N$, contains either a blue copy of $K^<_n$ or there are $M-N+1$ vertices of $K^<_M$ such that each one of them is the rightmost vertex of some red copy of $G^<$.
\end{lemma}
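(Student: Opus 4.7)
The plan is to argue by contradiction in a way that lets us apply the defining Ramsey property of $N = r_<(G^<,K^<_n)$ directly to a carefully chosen subset of vertices. Fix a red-blue coloring of the edges of $K^<_M$ with no blue copy of $K^<_n$, and let
\[
S = \{v \in [M] : v \text{ is the rightmost vertex of some red copy of } G^< \text{ in the coloring}\}.
\]
We want to show $|S| \geq M - N + 1$.

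Suppose for contradiction that $|S| \leq M - N$, so that the complement $T := [M] \setminus S$ has cardinality at least $N$. I would then pick any $N$ vertices $t_1 < t_2 < \dots < t_N$ in $T$ and restrict the given coloring to the ordered complete graph they induce. Since the vertex set is linearly ordered by the restriction of $<$, this restriction is a red-blue coloring of a copy of $K^<_N$. By the definition of $N = r_<(G^<,K^<_n)$, this restricted coloring must contain either a red copy of $G^<$ or a blue copy of $K^<_n$.

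The blue possibility is ruled out by our global assumption that no blue $K^<_n$ appears in the original coloring. Hence the restriction contains a red copy of $G^<$, which is also a red copy of $G^<$ in the original coloring of $K^<_M$ (the subgraph relation is preserved by taking induced ordered subgraphs). Its rightmost vertex is one of $t_1,\dots,t_N$, in particular an element of $T$; but by the definition of $S$, any such vertex must belong to $S$, contradicting $T = [M] \setminus S$. This completes the argument.

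The only subtlety — and what I would flag as the step most worth emphasizing — is the observation that the Ramsey property of $N$ can be applied not just to the initial segments $\{1,\dots,i\}$ (which is the naive approach and gives only weak information about $S$) but to \emph{any} $N$-element subset of $[M]$, because any such subset inherits both the linear order and the edge coloring. Once this is in place, choosing the subset inside the complement of $S$ forces the rightmost vertex of the resulting red $G^<$ to contradict the definition of $S$, and no further case analysis is needed. Connectedness of $G^<$ plays no role in the argument and is presumably inherited from the context where the lemma will be used.
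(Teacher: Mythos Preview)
Your proof is correct and takes a genuinely different route from the paper's argument. The paper proceeds constructively: it repeatedly applies the Ramsey property to the current vertex set, finds a red copy of $G^<$, deletes its rightmost vertex, and iterates; after $M-N+1$ deletions one has exhibited $M-N+1$ distinct rightmost vertices. Your argument instead works by contradiction in one shot: if fewer than $M-N+1$ vertices are rightmost vertices of red copies, then the complement has size at least $N$, and applying the Ramsey property to any $N$ vertices of the complement yields a red $G^<$ whose rightmost vertex lies outside $S$, a contradiction. The paper's approach is more explicit and makes the iterative structure visible, while yours is cleaner and isolates the key idea---that the Ramsey bound applies to \emph{any} $N$-element ordered subset, not just an initial segment---in a single application. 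Your remark that connectedness of $G^<$ is not used is also accurate; the hypothesis is there only because the goodness framework in which the lemma is applied concerns connected graphs.
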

\begin{proof}
Let $\chi_1$ be a red-blue coloring of the edges of $K^<_M$, where $M \geq N$.
By the definition of~$N$ and since $M \geq N$, the coloring $\chi_1$ contains either a red copy of $G^<$ or a blue copy of~$K^<_n$ as an ordered subgraph.
In the latter case we are finished.
In the former case, we can find a red copy of $G^<$ in $\chi_1$ and delete rightmost vertex $v_1$ from $K^<_M$, obtaining a red-blue coloring $\chi_2$ of the edges of $K^<_{M-1}$ vertices.
We can iterate this approach as long as the number of vertices is at least $N$.
For every $i=1,\dots,M-N+1$, we either find a blue copy of $K^<_n$ or a vertex $v_i$ that is the rightmost vertex of some red copy $G_i^<$ of $G^<$ in coloring $\chi_i$ that is obtained $\chi_{i-1}$ by removing $v_{i-1}$.
At the end, the vertices $v_1,\dots,v_{M-N+1}$ satisfy the statement of the lemma. 
\end{proof}

Assume that we have a connected $n$-good ordered graph $G^<$ with $k+1$ vertices.
It is sufficient to prove that the two ordered graphs $G^< + S^<_{1,m-k}$, $G^< + S^<_{m-k,1}$, where $m$ is an arbitrary integer larger than $k$, are $n$-good, as the remaining two cases from the statement of Theorem~\ref{thm:expansion} follow by symmetry. 
Let us denote $F^<_1 = G^< + S^<_{1,m-k}$ and $F^<_2 = G^< + S^<_{m-k,1}$ and note that $|F^<_1|=|F^<_2|=m$. 

Let $\chi$ be a red-blue coloring of the edges of $K^<_N$ for $N=(m-1)(n-1)+1$.
We will prove that $\chi$ contains either red copies of both $F^<_1$ and $F^<_2$ or a blue copy of $K^<_n$ as ordered subgraphs.
Since $G^<$ is $n$-good, we have $r_<(G^<,K^<_n)=k(n-1)+1$ and, by Lemma~\ref{lemma:copies}, there is a set $W$ of 
\[((m-1)(n-1)+1) - (k(n-1) +1) +1=(n-1)(m-k-1)+1\]
vertices such that each one of them is the rightmost vertex of at least one red copy of $G^<$ in $\chi$.

Assume that the coloring $\chi$ contains no red copy of $F^<_1$.
We show that $\chi$ then contains a blue copy of $K^<_n$.
We set $W_0=W$ and, for every $i=1,\dots,n-1$, we define $W_i$ as the set of all right blue neighbors in $W_{i-1}$ of the leftmost vertex $w_i$ from $W_{i-1}$.
Since there is no red copy of $F^<_1$, each $w_i$ has at most $(m-k-2)$ red neighbors to the right.
Therefore, in each transition from $W_{i-1}$ to $W_i$, at most $(m-k-1)$ vertices are removed and we have $|W_i| \geq |W_{i-1}| - (m-k-1)$ for every $i=1,\dots,n-1$.
Since $|W_0|=|W|=(n-1)(m-k-1)+1$, the set $W_{n-1}$ contains at least
\[(n-1)(m-k-1)+1 - (n-1)(m-k-1) = 1\]
vertices.
In particular, there is at least one vertex in $W_{n-1}$ and we set $w_n$ to be an arbitrary one of them.
By the choice of the sets $W_0,\dots,W_{n-1}$, there is no red edge in $\chi$ between any two vertices from $\{w_1,\dots, w_n\}$ and therefore they induce a blue copy of $K^<_n$ in $\chi$.

To show that there is a red copy of $F^<_2$ in $\chi$, we proceed analogously.
Assume that $\chi$ contains no such red copy.
We then set $W'_0=W$ and, for every $i=1,\dots,n-1$, define $W'_i$ as the set of all \emph{left} blue neighbors in $W'_{i-1}$ of the \emph{rightmost} vertex $w'_i$ from $W'_{i-1}$. We know that each $w'_i$ has at most $(m-k-2)$ red neighbors to the left, as there is no red copy of $F^<_2$ in $\chi$.
The same calculation as before then gives $|W'_{n-1}|\geq 1$ and we let $w'_n$ be an arbitrary vertex from $W'_{n-1}$.
The vertices $w'_1, \dots, w'_n$ then again induce a blue copy of $K^<_n$  in $\chi$.

\subsection{Proof of Proposition~\ref{prop:forbidden_caterpillar}}

In this subsection, we prove that a connected ordered graph $G^<$ is a monotone caterpillar graph if and only if $G^<$ does not contain any of the four ordered graphs from Figure~\ref{fig:forbidden_caterpillar} as an ordered subgraph.
The implication from left to right is trivial, because, by definition, a monotone caterpillar graph cannot contain any of the ordered graphs from Figure~\ref{fig:forbidden_caterpillar} as an ordered subgraph.

To prove the other implication, we show that any ordered graph that does not contain any of the four ordered graphs from Figure~\ref{fig:forbidden_caterpillar} is a monotone caterpillar graph.
We proceed by induction on the number of vertices.

The statement is trivial for connected ordered graphs with at most 3 vertices and it can be easily checked by hand for connected ordered graphs with exactly 4 vertices.
Now, for $n \geq 4$, assume the statement is true for all connected ordered graphs with at most $n$ vertices and let $G^<$ be a connected ordered graph with vertices $1,\dots,n+1$ such that $G^<$ does not contain any ordered graph from Figure~\ref{fig:forbidden_caterpillar} as an ordered subgraph.

Since $G^<$ is connected, the last vertex $n+1$ has at least one neighbor and we let $v$ be the leftmost neighbor of $n+1$.
The ordered subgraph $H^<$ of $G^<$ induced by vertices $1,\dots,v$ is a monotone caterpillar graph by the induction hypothesis.
If $v=n$, then $G^<=H^<+S_{1,2}$ and thus $G^<$ is also a monotone caterpillar graph.

If $v<n$, then we let $I=\{v+1,\dots,n\}$ and we note that $I$ is non-empty.
There are no edges between any two vertices from $I$, as otherwise $G^<$ contains $A=NM^<_2$ as an ordered subgraph.
There are also no edges going from $I$ to the left of $v$ as this would imply $B$ as an ordered subgraph. 
Since the ordered graph $G^<$ is connected, all vertices from $I$ thus are adjacent either to $v$ or $n+1$, but not to both, as otherwise we would get a copy of $D=K_3^<$ as an ordered subgraph of $G^<$.
Suppose some $v_1\in I$ is adjacent to $v$ and some other $v_2\in I$ is adjacent to $n+1$.
If $v_1 > v_2$, then the vertices $v,v_1,v_2,n+1$ induce a copy of $B$ in $G^<$.
On the other hand, if $v_1<v_2$, then  $v,v_1,v_2,n+1$ induce a copy of $C$ in $G^<$.
Since both $B$ and $C$ are forbidden in $G^<$, all vertices from $I$ are adjacent either to $n+1$ or all of them are adjacent to $v$.
In both cases $G^<$ is a monotone caterpillar graph, as either $G^<=H^<+S_{n-v+2,1}$ in the former case, or $G^<=H^<+S_{1,n-v+2}$ in the latter case.

\bibliography{bibliography}	
\bibliographystyle{plain}

\end{document}